\documentclass{amsart}
\usepackage{amssymb,latexsym}
\usepackage[mathscr]{eucal}
\usepackage{mathtools}
\usepackage{enumitem}

\theoremstyle{plain}
\newtheorem{theorem}{Theorem}
\newtheorem*{ML}{Main Lemma}
\newtheorem{lemma}{Lemma}%[theorem]

\theoremstyle{definition}
\newtheorem{example}{Example}

\theoremstyle{remark} 
\newtheorem*{remark}{Remark}

\newcommand{\refT}[1]{Theorem~\ref{T#1}}

\newcommand{\refL}[1]{Lemma~\ref{L#1}}

\newcommand{\ga}{\alpha}  \newcommand{\gb}{\beta}  
\newcommand{\gd}{\delta}  \newcommand{\dn}{\delta_n}
\newcommand{\gD}{\Delta}    
\newcommand{\gf}{\varphi}  \newcommand{\vta}{\vartheta}

\newcommand{\Z}{\mathbb{Z}}  
\newcommand{\CZ}{\mathcal{Z}}  

\newcommand{\cycp}{\Phi_{pq}}  \newcommand{\Q}{Q_{\{p,q\}}}
\newcommand{\la}{\langle}  \newcommand{\ra}{\rangle}
\newcommand{\vz}{\mathbf{z}}  \newcommand{\tz}{(z_1,z_2,\dots,z_t)}
\newcommand{\ib}{\bar{\iota}}

\author[G. Bachman]{Gennady Bachman}
\address{Department of Mathematical Sciences\\ University of Nevada Las Vegas\\
4505 Maryland Parkway, Box 454020\\
Las Vegas, Nevada 89154-4020, USA}
\email{gennady.bachman@unlv.edu}

\date{May 2026}

\begin{document}

\title[Gaps of Binary Semigroups]{Gaps of Binary Numerical Semigroups and of Binary Inclusion-Exclusion Polynomials}

\begin{abstract}
Let $p$ be a given modulus, let $u$ be prime to $p$, and consider the linear permutation $u\cdot n\pmod p$ of the residue system modulo $p$. Writing $\la x\ra_p$ to denote the least nonnegative residue of $x$ modulo $p$, we say that a pair of integers $(a,b)$ is a dominant pair of this permutation if either the inequality $\max(\la ua\ra_p,\la ub\ra_p)<\min_{a<n<b}\la un\ra_p$, or the inequality $\min(\la ua\ra_p,\la ub\ra_p)>\max_{a<n<b}\la un\ra_p$ hold. The main technical part of this work gives analysis of this property of linear permutations of residue systems. We then apply this analysis to the problems that motivated it, and give (i) complete description of the gapsets of binary inclusion-exclusion polynomials $Q_{\{p,q\}}$ (which include binary cyclotomic polynomials $\Phi_{pq}$ as its principal special case), and (ii) complete description of all possible distances between consecutive elements of a numerical semigroup $\la p,q\ra$.
\end{abstract}

\subjclass[2020]{11B75; 11B83; 11A05; 11A07}  %% Primary 11B83; Secondary 11C08

\keywords{Cyclotomic polynomials, inclusion-exclusion polynomials, semigroups, Euclidean algorithm for residue systems}

\maketitle

\section{Introduction}
In their recent work \cite{HLLP}, Hong, et al. introduced a new and interesting puzzle about the behavior of coefficients of cyclotomic polynomials $\Phi_{pq}(x)$, where $p<q$ are distinct odd primes. To state their problem, we need to introduce the notion of the \emph{gapset} of a polynomial. Given a polynomial $f(x)$ of degree $d$, write it down omitting its zero coefficients $f(x)=c_1x^{e_1}+c_2x^{e_2}+\dots+c_kx^{e_k}$, where $0\le e_1<e_2<\dots<e_k=d$ and $c_i\ne0$. Then, if $k>1$, the gapset of $f$ is given by $G(f)\coloneq\{e_{i+1}-e_i\}$, and the individual elements are the \emph{gaps} of $f$. Motivated by a certain application, Hong, et al, were particularly interested in the largest gap of $\Phi_{pq}$, denoted by $g_1(\Phi_{pq})$. They showed that $g_1(\Phi_{pq})=p-1$, and in the process initiated the problem of studying the gaps of $\Phi_{pq}$.

This was followed by several new contributions investigating this and related problems. Below we describe those among them that are most relevant to the objectives of this paper. For a more comprehensive discussion and references to these contributions we refer the reader to the papers \cite{AAHL} and \cite{Sa}. New proofs of the $g_1(\Phi_{pq})=p-1$ result were given in \cite{Mo, Zh, CCLMS}. Zhang \cite{Zh} showed that the gap $p-1$ occurs exactly $\lfloor q/p\rfloor$ times. The simplest special case of the gaps problem for $\Phi_{pq}$ occurs when $q\equiv\pm1\pmod p$. The analysis of this case was given, among other results, by Camburu, et al, in \cite{CCLMS}. They showed that the relation $q\equiv\pm1\pmod p$ is equivalent to the evaluation $G(\Phi_{pq})=\{1,2,\dots,p-1\}$. In addition to the largest gap, we now also know the second largest gap of $\Phi_{pq}$, denoted by $g_2(\Phi_{pq})$. This result is due to Cafure and Cesaratto \cite{CC} and it says that $g_2(\Phi_{pq})=\max(r-1,p-r-1)$, where $r$ is the remainder of $q$ on division by $p$. Finally, we mention what is known about the gaps of general cyclotomic polynomials $\Phi_n$. It is clear the the situation here is much more challenging than in the binary case we have been discussing above. Nevertheless, Al-Kateeb, et al, \cite{AAHL} were able to determine the largest gap of $\Phi_n$ for $n$ satisfying certain constraints. First, they note that if $n_0=\prod_{p\mid n, p\ge3}p$ is an ``odd squarefree kernel'' of $n$, then $g_1(\Phi_n)=\frac n{n_0}g_1(\Phi_{n_0})$. (Note that this applies to our case $n_0=pq$.) So the problem reduces to the study of gaps of $\Phi_n$ for squarefree odd integers $n$. For such $n$, they showed that if the largest prime factor $q$ of $n$ satisfies $q>\sqrt n$, then $g_1(\Phi_n)=\gf(n/q)$, where $\gf$ is the Euler's totient function.

Henceforth we shall assume that integers $q>p\ge3$ are coprime, but not necessarily prime numbers. We will work in the more general setting of binary inclusion-exclusion polynomials which are defined as quotients
\begin{equation}\label{1}
\Q(x)\coloneq\frac{(1-x^{pq})(1-x)}{(1-x^p)(1-x^q)}.
\end{equation}
It is easy to verify (see \cite{Ba} for an introduction to inclusion-exclusion polynomials) that $\Q(x)$ reduces to a polynomial of degree $\gf(p,q)\coloneq(p-1)(q-1)$, and we write
\[
\Q(x)=\sum_{0\le m\le\gf(p,q)}a_mx^m \qquad[a_m=a_m(p,q)].
\]
When the pair $\{p,q\}$ is a pair of prime numbers, polynomial $\Q$ is better known as the cyclotomic polynomial $\cycp$. In particular, our gap results for this larger class of polynomials apply to the subclass of cyclotomic polynomials $\cycp$.

From the definition \eqref{1} and the fact that the degree of $\Q$ is $\gf(p,q)$, it follows that
\begin{equation}\label{2}
\Q(x)\equiv(1-x)\sum_{i,j\ge0}x^{iq+jp} \pmod{x^{\gf(p,q)+1}}.
\end{equation}
Two things are now clear. First, is that each coefficient $a_m$ takes on one of three possible values, 0 or $\pm1$. Second, is that it is all about integers representable as linear combinations of $p$ and $q$ with nonnegative integer coefficients,
\begin{equation}\label{2.a}
\la p,q\ra\coloneq\{\,iq+jp\mid i,j\ge0\,\}.
\end{equation}
Such a structure is called a numerical semigroup of embedding dimension two (see, for example \cite[\S7]{RA})---we took the liberty of abbreviating this to ``binary semigroup'' in our title. In view of \eqref{2}, of crucial importance to us is the interesting part of $\la p,q\ra$, namely
\[
S(p,q)\coloneq\la p,q\ra\cap[0,(p-1)(q-1)].
\]
We say interesting, because the largest integer not in $\la p,q\ra$ is $pq-p-q$ (see \cite[\S 2.1]{RA}). The total number of integers in $[0,(p-1)(q-1)]$ missing from $S(p,q)$ is $\vta\coloneq\gf(p,q)/2$ (see \cite[\S 5.1]{RA}), the largest of which is $pq-p-q$, and the number of integers in $S(p,q)$ is $\vta+1$. We ought to comment that, unfortunately, we encounter here a bit of a notational conflict. The terms ``gap'' as well as ``nongap'' are also used in the context of these semigroups---the elements of $\la p,q\ra$ are the nongaps and the missing integers are the gaps. We avoid this conflict by using the terms representable and nonrepresentable instead.

It readily follows from \eqref{2} that the gaps of $\Q$ correspond to blocks of consecutive representable integers and blocks of consecutive nonrepresentable integers in $[0,\gf(p,q)]$. More precisely (see Section 4 for details), by a block we mean two or more of consecutive integers of the same type, and the corresponding gap equals to the length of the block. Let us list the elements of $S(p,q)$ in the increasing order: $\ell_0<\ell_1<\dots<\ell_{\vta}$, so that $\ell_0=0,\,\ell_1=p,\,\dots,\,\ell_{\vta}=(p-1)(q-1)$, and do the same for nonrepresentables: $n_1<n_2<\dots<n_{\vta}$, so that $n_1=1,\, n_2=2,\,\dots,\,n_{\vta}=pq-p-q$. Then block lengths are given by various values of $\ell_{j+1}-\ell_j-1>0$ and $n_{j+1}-n_j-1>0$, and the preceding assertion is that
\[
G(\Q)=\{\,\ell_{j+1}-\ell_j-1>0\,\}\cup\{\,n_{j+1}-n_j-1>0\,\}.
\]
So the problem of understanding the gapset of polynomials $\Q$ comes to the same thing as the problem of understanding the sets
\begin{equation}\label{3}
S_\gD(p,q)\coloneq\{\ell_{j+1}-\ell_j\} \quad\text{and}\quad N_\gD(p,q)\coloneq\{n_{j+1}-n_j\},
\end{equation}
an interesting problem in its own right. We give a complete solution in Section 3.

Now consider $p$ as a given modulus and let $u$ be a fixed integer coprime with $p$. (In our applications to $\Q$ and to $\la p,q\ra$, $u$ will be a multiplicative inverse of $q$ modulo $p$.) Linear congruence $u\cdot n\pmod p$ gives a permutation of the residue system modulo $p$. We shall show that, for the problems discussed above, the crux of the matter is to understand the following property of such permutations. We use the notation $\la x\ra_p$ to denote the least nonnegative residue of $x$ modulo $p$, that is,
\[
\la x\ra_p\equiv x\pmod p \quad\text{and}\quad 0\le\la x\ra_p<p.
\]
We wish to understand the \emph{dominant pairs} of integers $a$ and $b$: these are integers $a<b$ such that either
\begin{equation}\label{4}
\max(\la ua\ra_p, \la ub\ra_p)<\min_{a<n<b}\la un\ra_p
\end{equation}
or
\begin{equation}\label{5}
\min(\la ua\ra_p, \la ub\ra_p)>\max_{a<n<b}\la un\ra_p.
\end{equation}
To give the reader a quick idea of the relevance of such pairs, we point out that it is plain that the maximum difference $b-a$ for pairs $(a,b)$ satisfying \eqref{4} is $p$, for example $a=0$ and $b=p$, and the maximum gap of $\Q$ is $p-1=b-a-1$.

The next section, the main part of this paper, studies \eqref{4} and \eqref{5} and, in particular, gives the complete description of the set of values $\{b-a\}$ for the dominant pairs. To carry out our analysis we use what we shall call the \emph{Euclidean algorithm for residue systems}. This gives a representation of a residue system modulo $p$ that renders conditions \eqref{4} and \eqref{5} quite transparent. We hope that this representation is of independent interest and might see other uses in the future.

\section{Linear Permutations}
Let a modulus $p$ ($\ge3$) and an integer $u$ prime to $p$ be given. In this section we study the dominant pairs $(a,b)$, that is, the solutions of the inequalities \eqref{4} and \eqref{5}. Let $r_1$, $0<r_1<p$, be the multiplicative inverse of $u$ modulo $p$, so that $\la ur_1\ra_p=1$.

\begin{lemma}\label{L1}
Let $D=D(p,u)$ be the collection of all pairs $(a,b)$ satisfying \eqref{4} and let $D'=D'(p,u)$ be the collection of all pairs $(a,b)$ satisfying \eqref{5}. Then the involution
\[
(x,y) \longmapsto (-r_1-y, -r_1-x)
\]
is a bijection on the sets $D$ and $D'$, whence
\[
\{\, b-a \mid (a,b)\in D \,\}=\{\, b-a \mid (a,b)\in D' \,\}.
\]
\end{lemma}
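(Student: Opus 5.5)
The plan is to find how the map $\sigma(n)=-r_1-n$ acts on the residues $\la un\ra_p$, and then read off both conditions. Since $\la ur_1\ra_p=1$, we get $u(-r_1-n)\equiv -1-un \pmod p$. Hence for every integer $n$
\[
\la u(-r_1-n)\ra_p = p-1-\la un\ra_p ,
\]
because $x\mapsto p-1-x$ maps $\{0,1,\dots,p-1\}$ onto itself and is congruent to $-1-x$. So $\sigma$ acts on the values $\la un\ra_p$ through the order-reversing bijection $x\mapsto p-1-x$ of $[0,p-1]$. That reversal is exactly what swaps \eqref{4} and \eqref{5}.

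Next I would check how intervals transform. Put $\tau(x,y)=(-r_1-y,-r_1-x)$. If $a<b$, then $-r_1-b<-r_1-a$, so $\tau(a,b)=(a',b')$ with $a'=-r_1-b$, $b'=-r_1-a$ and $a'<b'$. Also $b'-a'=b-a$. The map $n\mapsto -r_1-n$ is a bijection from the open integer interval $(a,b)$ onto $(a',b')$, and it carries the endpoints $\{a,b\}$ onto $\{b',a'\}$.

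Now suppose $(a,b)\in D$, so $\max(\la ua\ra_p,\la ub\ra_p)<\min_{a<n<b}\la un\ra_p$. Apply the decreasing map $x\mapsto p-1-x$ to both sides. The maximum over the endpoints becomes a minimum over the image endpoints, the minimum over the interior becomes a maximum over the image interior, and the inequality reverses. This gives
\[
\min(\la ua'\ra_p,\la ub'\ra_p)>\max_{a'<n<b'}\la un\ra_p,
\]
so $\tau(a,b)\in D'$. The same argument run in reverse sends $D'$ into $D$.

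Finally, $\tau$ is an involution: $\tau(\tau(x,y))=(-r_1-(-r_1-x),\,-r_1-(-r_1-y))=(x,y)$. So $\tau$ restricts to mutually inverse bijections $D\to D'$ and $D'\to D$, which is what the lemma means by a bijection on $D$ and $D'$. Because $\tau$ preserves $b-a$, the two sets of differences coincide. Nothing here is hard; the only point needing care is the endpoint convention when $b=a+1$. Then the interior is empty, and I would treat the condition as holding vacuously in both \eqref{4} and \eqref{5}, since $\tau$ maps empty interiors to empty interiors.
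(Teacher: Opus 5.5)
Your proposal is correct and is essentially the paper's proof: the paper also rests on the identity $p-1-\la ux\ra_p=\la u(-r_1-x)\ra_p$, whose order reversal turns \eqref{4} into \eqref{5} under $(a,b)\mapsto(-r_1-b,-r_1-a)$, and conversely. You spell out what the paper leaves implicit: the correspondence of open intervals, preservation of $b-a$, the involution property, and the vacuous case $b=a+1$.
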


\begin{proof}
Observe that $p-1-\la ux\ra_p=\la u(-r_1-x)\ra_p$, since $0\le p-1-\la ux\ra_p\le p-1$. It follows that if \eqref{4} holds for the pair $(a,b)$, then \eqref{5} holds with $(a,b)$ replaced by $(-r_1-b,-r_1-a)$. In the opposite direction, if \eqref{5} holds for the pair $(a,b)$, then \eqref{4} holds with $(a,b)$ replaced by $(-r_1-b,-r_1-a)$.
\end{proof}

In view of \refL{1}, it suffices to examine the inequality \eqref{4}. The key objective of this section is to show that this inequality is governed by the outcome of the Euclidean algorithm applied to integers $p$ and $r_1$. We will find it convenient to use the following slightly nonstandard notation for the Euclidean algorithm. Put $r_0=p$ and define the sequences $\bigl(r_i\bigr)_{i=0}^{t+1}$ and $\bigl(Z_i\bigr)_{i=1}^t$ by the formula
\begin{equation}\label{2.1}
r_{i-1}=Z_ir_i+r_{i+1}, \quad 0\le r_{i+1}<r_i,
\end{equation}
for $1\le i\le t$, where $r_t=1$, so that $r_{t+1}=0$ and $Z_t=r_{t-1}$.

\begin{ML}
Let $D_\gD=D_\gD(p,u)\coloneq\{\,b-a\mid(a,b)\in D\,\}$. Then
\[
D_\gD=\{\,r_{i-1}-zr_i\mid 0\le z\le Z_i-1 \text{ and } 1\le i\le t\,\}.
\]
\end{ML}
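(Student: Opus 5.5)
The plan is to turn \eqref{4} into a statement about gaps between points on a circle, and then to follow how those gaps evolve as points are added one at a time. Put $f(n)=\la un\ra_p$. Since $ur_1\equiv1\pmod p$, we have $n\equiv r_1f(n)\pmod p$, so for each $m$ with $0\le m\le p-1$,
\[
P_m\coloneq\{\,n\in\Z \mid f(n)\le m\,\}=\{\,n \mid n\equiv kr_1 \pmod p \text{ for some } 0\le k\le m\,\}.
\]
This is a $p$-periodic set.

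The first step is the reformulation. A pair $(a,b)$ with $a<b$ satisfies \eqref{4} exactly when, with $m=\max(f(a),f(b))$, the integers $a$ and $b$ are consecutive elements of $P_m$. Moreover, one of $a,b$ lies in the class of $mr_1$, the point newly added at stage $m$. Indeed, every $n$ strictly between $a$ and $b$ has $f(n)>m$, so $n\notin P_m$; the converse is immediate. Consequently $D_\gD$ is the set of lengths of the (at most two) gaps of $P_m$, viewed on the circle $\Z/p\Z$, that are adjacent to the new point $mr_1$, taken over all $0\le m\le p-1$.

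For $m=0$ the set $P_0=p\Z$ consists of a single point, and its gap has length $p=r_0-0\cdot r_1$. This is the term $i=1$, $z=0$.

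The second step is to analyse how the gaps of $P_{m-1}$ evolve as $m$ increases. This is the \emph{Euclidean algorithm for residue systems}, essentially a constructive form of the three-gap theorem for the rotation by $r_1/p$. I would prove the following claim by induction on $m$. At every stage the gaps of $P_{m-1}$ have lengths among
\[
r_i \quad\text{and}\quad r_{i-1}-zr_i \quad (0\le z\le Z_i-1)
\]
for a suitable current index $i$. The new point $mr_1$ always falls into a gap of the current largest type $r_{i-1}-zr_i$. It splits that gap into two pieces of lengths $r_i$ and $r_{i-1}-(z+1)r_i$. When $z+1=Z_i$, the second piece has length $r_{i+1}$, and the index advances to $i+1$.

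The geometric mechanism behind the splitting rule is this. Translation by $r_1$ maps gaps to gaps, except for the single gap containing $-r_1$ (equivalently, the gap ending at the previous newest point). So the new point sits at distance $r_i$ from an existing point, namely the one realising the current closest approach of multiples of $r_1$ to $0$ modulo $p$. That distance is exactly the current remainder in the Euclidean algorithm applied to $p$ and $r_1$.

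Granting the claim, the two gaps adjacent to the new point have lengths $r_i$ and $r_{i-1}-(z+1)r_i$. Both belong to the set in the Main Lemma. The value $r_i$ equals $r_i-0\cdot r_{i+1}$, the term at index $i+1$ with $z=0$ when $i<t$. When $i=t$ it equals $1=r_{t-1}-(Z_t-1)r_t$. This gives the inclusion $\subseteq$.

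For the inclusion $\supseteq$, the inductive description shows that each value $r_{i-1}-zr_i$ with $1\le z\le Z_i-1$ occurs as a piece produced by a split at stage $i$. The values with $z=0$ are $r_{i-1}$. These are $p$ itself when $i=1$, and for $i\ge2$ they arise as the $z=Z_{i-1}-1$ split at stage $i-1$, which produces $r_{i-2}-Z_{i-1}r_{i-1}+r_{i-1}$... Rather, the piece of length $r_{i-1}$ is produced directly as the short piece at stage $i-1$. So every listed value is realised.

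The main obstacle is proving the splitting rule rigorously, in particular the fact that the new point always lands in a gap of the current maximal length. I would handle this by tracking two distinguished points: the multiple $kr_1$ closest to $0$ from the left and the one closest from the right. Their distances to $0$ are consecutive remainders $r_i$ and $r_{i-1}-zr_i$. The argument then proceeds by the usual best-approximation argument for the convergents of $r_1/p$.
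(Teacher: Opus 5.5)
Your reformulation is correct, and it is a genuinely different viewpoint from the paper's. A pair $(a,b)$ lies in $D$ exactly when $a,b$ are consecutive in $P_m$ with $m=\max(f(a),f(b))$. So $D_\Delta$ consists of the lengths of the gaps created when $mr_1$ is added.

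\textbf{The gap.} The step that carries all the content is misstated. Your inductive splitting claim is false as written. Take $p=11$, $r_1=4$, so $r_2=3$, $r_3=1$, $Z_1=2$, $Z_2=1$. Then $P_3=\{0,1,4,8\}+11\mathbb{Z}$ has gaps $1,3,4,3$. For no $i$ is $\{1,3,4\}$ contained in $\{r_i\}\cup\{r_{i-1}-zr_i : 0\le z\le Z_i-1\}$.

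Your update rule breaks at the same place. The point $1$ splits a gap $4=r_1-0\cdot r_2$ with $z+1=Z_2$, so you advance to $i=3$. Yet the next point $5$ splits the other gap of length $4$, which is not of the form $r_2-zr_3$. In three-gap phases the lengths present are $r_i$, $r_{i-1}-(z+1)r_i$ and their sum. The index can only advance once every gap of the old maximal length has been split.

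Your arguments for both inclusions read the piece lengths off this bookkeeping. Its proof, which you yourself call the main obstacle, is deferred to ``the usual best-approximation argument.'' So the Main Lemma has not actually been proved.

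\textbf{The fix.} The largest-gap property is not needed at all. Directly from the definition, the two gaps created at stage $m$ have lengths
\[
L_m=\min_{1\le j\le m}\langle jr_1\rangle_p,\qquad R_m=\min_{1\le j\le m}\langle -jr_1\rangle_p .
\]
Hence $D_\Delta=\{p\}\cup\{L_m\}\cup\{R_m\}$. What remains is to show that these running one-sided minima take exactly the values $r_{2k-1}-zr_{2k}$ and $r_{2k}-zr_{2k+1}$. That is precisely what the paper's Lemmas 9 and 11 (left records) and Lemmas 8 and 10 (right records) establish. For example, Lemma 11's normalized condition $\langle uc\rangle_p<\min_{0<n<c}\langle un\rangle_p$ says exactly that $c$ is a new value of $L_m$ at $m=\langle uc\rangle_p$. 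The paper proves this through the numeration $n=R(\mathbf{z})$ with $\langle uR(\mathbf{z})\rangle_p=\sum z_is_i$.

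In your framework the gap can be closed directly:
\begin{enumerate}
\item Let $L=\langle j_Lr_1\rangle_p$ and $R=\langle -j_Rr_1\rangle_p$ be the current records. Start from $j_L=1$, $L=r_1$, $j_R=0$, $R=p$, and carry the invariant $j_LR+j_RL=p$, the analogue of Lemma 5.
\item Put $N=j_L+j_R$. For $0\le k<N$, the point $kr_1$ has another point of the set at distance $L$ to its right if $k<j_R$, and at distance $R$ if $k\ge j_R$.
\item These distances sum to $j_RL+j_LR=p$, so they are exactly the gaps. Hence no new record occurs before time $N$.
\item At time $N$ we have $Nr_1\equiv L-R$. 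So the larger of $L,R$ drops by the smaller and the invariant persists, until $L=R=1$ at $N=p$.
\end{enumerate}
This subtractive Euclidean algorithm on $(p,r_1)$ runs through exactly the values $r_{i-1}-zr_i$ with $0\le z\le Z_i-1$, and each one is attained. The initial $R=p$ accounts for the gap at $m=0$.
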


\begin{remark}
In view of the process that generates solutions to \eqref{4}, $D_\gD(p,u)$ naturally partitions into $t$ parts $D_i(p,u)$:
\begin{equation}\label{2.2}
D_\gD=\bigcup_{i=1}^tD_i=\bigcup_{i=1}^t\{\,r_{i-1}-zr_i\mid 0\le z\le Z_i-1\,\}.
\end{equation}
In this scheme the ``larger differences'' go into $D_1$, ``next larger differences'' go into $D_2$, etc.
\end{remark}

Our proof of the Main Lemma uses a representation of the residue system modulo $p$ that is a byproduct of the Euclidean algorithm for $p$ and $r_1$. We carry out this construction next.

\subsection{The Euclidean Algorithm for Residue Systems}
\begin{lemma}\label{L2}
For $1\le i\le t$, write $t=i+2k+1$ or $t=i+2k$. Then we have
\[
\sum_{0\le j\le k}Z_{i+2j}r_{i+2j}=\begin{cases}
r_{i-1}-1, &\text{if } i=t-2k-1\\
r_{i-1}, &\text{if } i=t-2k.
\end{cases} \]
\end{lemma}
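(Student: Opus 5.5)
The plan is a telescoping argument based on the Euclidean recursion \eqref{2.1}. Rewrite \eqref{2.1} as
\[
Z_jr_j=r_{j-1}-r_{j+1}, \qquad 1\le j\le t.
\]
Summing over the indices $j=i,i+2,\dots,i+2k$ then gives
\[
\sum_{0\le j\le k}Z_{i+2j}r_{i+2j}=\sum_{0\le j\le k}\bigl(r_{i+2j-1}-r_{i+2j+1}\bigr)=r_{i-1}-r_{i+2k+1}.
\]
This is a genuine telescoping sum because consecutive terms share the index $i+2j+1$. All indices used stay within the range $1\le i+2j\le t$ where \eqref{2.1} is valid, since $i+2k\le t$ in both cases of the statement.

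It remains to evaluate the boundary term $r_{i+2k+1}$ using the normalization $r_t=1$ and $r_{t+1}=0$.

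\emph{Case $t=i+2k+1$.} Here $i+2k+1=t$, so $r_{i+2k+1}=r_t=1$ and the sum equals $r_{i-1}-1$.

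\emph{Case $t=i+2k$.} Here $i+2k+1=t+1$, so $r_{i+2k+1}=r_{t+1}=0$ and the sum equals $r_{i-1}$.

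These are exactly the two cases asserted in \refL{2}.

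The only point needing care, and it is a minor one, is the last equation $j=t$. For it, \eqref{2.1} reads $r_{t-1}=Z_tr_t+0$, which is consistent with the convention $Z_t=r_{t-1}$ and $r_{t+1}=0$. So the identity $Z_jr_j=r_{j-1}-r_{j+1}$ holds uniformly for all $1\le j\le t$, and no separate treatment of the top index is required.
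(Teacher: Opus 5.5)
Your proposal is correct and is essentially the paper's own proof: you rewrite \eqref{2.1} as $Z_jr_j=r_{j-1}-r_{j+1}$, telescope to $r_{i-1}-r_{i+2k+1}$, and finish with $r_t=1$, $r_{t+1}=0$. Your check that all indices satisfy $1\le i+2j\le t$ is a small detail the paper leaves implicit.
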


\begin{proof}
Recall that, by \eqref{2.1}, $Z_ir_i=r_{i-1}-r_{i+1}$. Therefore,
\[
\sum_{0\le j\le k}Z_{i+2j}r_{i+2j}=\sum_{0\le j\le k}(r_{i-1+2j}-r_{i+1+2j})=r_{i-1}-r_{i+1+2k},
\]
and the claim follows since $r_t=1$ and $r_{t+1}=0$.
\end{proof}

Next, we introduce several conventions and definitions. For each $1\le i\le t$, variable $z_i$ will always denote an integer in the range $0\le z_i\le Z_i$. The following generalizations of the sum appearing in \refL{2} will prove to be useful. Put, for $0\le j_1\le j_2\le(t-1)/2$,
\begin{equation}\label{2.3}
\sigma_1(j_1,j_2;z_{2j_2+1})\coloneq\sum_{j_1\le j\le j_2-1}Z_{2j+1}r_{2j+1}+z_{2j_2+1}r_{2j_2+1},
\end{equation}
note that this is just $z_{2j_2+1}r_{2j_2+1}$ if $j_2=j_1$, and, for $0\le j_1\le j_2\le t/2$,
\begin{equation}\label{2.4}
\sigma_0(j_1,j_2;z_{2j_2})\coloneq\sum_{j_1\le j\le j_2-1}Z_{2j}r_{2j}+z_{2j_2}r_{2j_2}.
\end{equation}
We let $\vz$ denote a $t$-tuple $\tz$. Not all such $t$-tuples will play a role in our construction. In addition to $\mathbf{0}=(0,0,\dots,0)$, we will use $\vz$ with the properties: (i) the smallest index $i$ such that $z_i>0$ is odd, and (ii) $0\le z_t\le Z_t-1$. Let us denote the subset of such $t$-tuples by $\CZ$, and let $\ib=\ib(\vz)$ denote the special index of property (i). Finally, for $\vz\in\CZ$, we put
\begin{equation}\label{2.5}
R(\vz)\coloneq\sum_{1\le i\le t}z_i(-1)^{i-1}r_i.
\end{equation}

\begin{lemma}\label{L3}
For $\vz\in\CZ$, we have $0<R(\vz)<p$, unless $\vz=\textbf{0}$, and $R(\textbf{0})=0$.
\end{lemma}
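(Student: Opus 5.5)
We bound the positive and negative parts of $R(\vz)$ separately, using \refL{2} and the sums \eqref{2.3}, \eqref{2.4}. The case $\vz=\mathbf{0}$ is trivial, so assume $\vz\in\CZ$ with $\vz\ne\mathbf{0}$, and let $\ib=2m+1$ be its special index. Split $R(\vz)=P-N$, where
\[
P=\sum_{i \text{ odd}} z_ir_i, \qquad N=\sum_{i \text{ even}} z_ir_i .
\]
All $z_i$ with $i<\ib$ vanish, so both sums effectively start at index $\ib$ or later. The bounds come from replacing each $z_i$ by its extreme value, $0$ or $Z_i$, and then telescoping with \refL{2}.

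For the upper bound $R(\vz)<p$, drop $N\ge0$ and bound $P$ by taking all odd $z_i$ maximal, starting from $i=1$. By \refL{2} with $i=1$, the sum $\sum_j Z_{1+2j}r_{1+2j}$ over odd indices up to $t$ equals either $r_0-1=p-1$ or $r_0=p$. The second case occurs exactly when $t$ is odd and the last term is $Z_tr_t$. Property (ii) of $\CZ$ forces $z_t\le Z_t-1$, which costs at least $r_t=1$ in that case. Hence in both cases $P\le p-1$, and so $R(\vz)\le p-1<p$.

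For the lower bound $R(\vz)>0$, first bound the positive part from below: $P\ge z_{\ib}r_{\ib}\ge r_{\ib}$, since $z_{\ib}\ge1$. Next bound the negative part from above by taking all even $z_i$ with $i>\ib$ maximal:
\[
N\le\sum_{j\ge m+1} Z_{2j}r_{2j}.
\]
This is the sum in \refL{2} with starting index $i=\ib+1$, so it equals $r_{\ib}-1$ or $r_{\ib}$. Therefore $R(\vz)\ge r_{\ib}-r_{\ib}=0$, and equality is possible only in the second case of \refL{2}.

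The main obstacle is ruling out equality in that second case. It occurs when $t$ is even, i.e.\ $\ib+1=t-2k$, so the last term of the even sum is $Z_tr_t$ with $t$ even. Property (ii) again gives $z_t\le Z_t-1$, which reduces the bound on $N$ by at least $r_t=1$. Thus $N\le r_{\ib}-1$ in all cases, and $R(\vz)\ge1$.

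A degenerate case needs a separate check: $\ib=t$. Then $t$ is odd and $N=0$, so $R(\vz)=z_tr_t=z_t$, which is at least $1$ because $z_{\ib}\ge1$. The bookkeeping to verify is that the parity convention of \refL{2} ($t=i+2k+1$ versus $t=i+2k$) matches which case (ii) applies to, both in the upper-bound step (starting index $1$) and in the lower-bound step (starting index $\ib+1$).
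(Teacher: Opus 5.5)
Your proposal is correct and follows essentially the same route as the paper: drop the even-indexed terms and apply \refL{2} with $i=1$ (using $z_t\le Z_t-1$ when $t$ is odd) for the upper bound, then bound $R(\vz)$ below by $z_{\ib}r_{\ib}$ minus the even-indexed sum, estimated by \refL{2} with $i=\ib+1$ and again adjusted by $z_t\le Z_t-1$. Your separate check of the case $\ib=t$, where $i=\ib+1=t+1$ lies outside the range of \refL{2}, makes explicit a point the paper passes over; it is harmless there because the subtracted sum is empty.
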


\begin{proof}
For the upper bound, we have
\[
R(\vz)\le z_1r_1+z_3r_3+\dots\le p-1,
\]
by \refL{2} with $i=1$. Note that we use the fact that $z_t\le Z_t-1$, to get the last inequality for odd values of $t$. For the lower bound, since $z_i=0$ for all $i<\ib$, we have
\[
R(\vz)\ge z_{\ib}r_{\ib}-(z_{\ib+1}r_{\ib+1}+z_{\ib+3}r_{\ib+3}+\dots).
\]
Another application of \refL{2} (adjusted for $z_t\le Z_t-1$), with $i=\ib+1$, gives $R(\vz)\ge1$.
\end{proof}

\begin{lemma}\label{L4}
Each integer $n\in[0,p)$ has a representation in the form $n=R(\vz)$, for some $\vz\in\CZ$.
\end{lemma}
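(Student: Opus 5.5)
The plan is a greedy, Euclidean-style expansion proved by downward induction on the starting index. For $1\le i\le t+1$ I will prove the following statement $P(i)$.

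$P(i)$: every integer $m$ with $0\le m<r_{i-1}$ can be written as
\[
m=\sum_{i\le j\le t}z_j(-1)^{j-i}r_j ,
\]
where $0\le z_j\le Z_j$, $z_t\le Z_t-1$, and either all $z_j=0$ or the smallest $j$ with $z_j>0$ has the same parity as $i$. For $i=t+1$ the range is $0\le m<r_t=1$, so only $m=0$ occurs and the empty sum handles it. \refL{4} is then $P(1)$ applied to $n\in[0,p)=[0,r_0)$: setting $\vz=\tz$ gives $\vz\in\CZ$ and $n=R(\vz)$.

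The base case $i=t$ is immediate. Here $r_t=1$ and $m<r_{t-1}=Z_t$, so I take $z_t=m\le Z_t-1$. If $m>0$, its first nonzero index is $t$, which has the right parity.

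For the inductive step, fix $i<t$ and assume $P(i+1)$ and $P(i+2)$. Take $m$ with $0<m<r_{i-1}$ (the case $m=0$ is trivial). By \eqref{2.1}, $r_{i-1}=Z_ir_i+r_{i+1}$, and I split into two cases.

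\emph{Case $m\le Z_ir_i$.} Put $z_i=\lceil m/r_i\rceil$, so $1\le z_i\le Z_i$. Set $m'=z_ir_i-m$; then $0\le m'<r_i$. Apply $P(i+1)$ to $m'$, getting $m'=\sum_{j\ge i+1}z_j(-1)^{j-i-1}r_j$. Then
\[
m=z_ir_i-m'=\sum_{j\ge i}z_j(-1)^{j-i}r_j .
\]
The first nonzero index is $i$, and the bound on $z_t$ is inherited from $P(i+1)$.

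\emph{Case $Z_ir_i<m<r_{i-1}$.} Put $z_i=Z_i$ and $s=m-Z_ir_i$, so $0<s<r_{i+1}$. Set $z_{i+1}=0$ and apply $P(i+2)$ to $s$. The signs line up, because the terms from index $i+2$ onward carry $(-1)^{j-i}=(-1)^{j-i-2}$. Again the first nonzero index is $i$.

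If $i=t-1$, the second case cannot occur, since it would need $0<s<r_t=1$. So the induction never calls on a nonexistent $P(t+1)$ with $m\neq0$, and it also never calls on $P(t+2)$.

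I expect the main obstacle to be this boundary bookkeeping, not the core idea. Specifically, I must check three things. First, the naive choice $\lceil m/r_i\rceil$ can reach $Z_i+1$ exactly when $m>Z_ir_i$, and the second case is what absorbs this. Second, the constraint $z_t\le Z_t-1$ survives every step; it is only ever set in the base case, where $m<Z_t$. Third, the parity condition on the first nonzero index propagates correctly, so that at $i=1$ the tuple really lies in $\CZ$.
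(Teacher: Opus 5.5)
Your argument is correct and is essentially the paper's own greedy ``zooming in'' algorithm. Your second case (take $z_i=Z_i$, $z_{i+1}=0$, pass to index $i+2$) continues a run of full coefficients of one parity inside a $\sigma_1$ or $\sigma_0$ block, and your first case (take the minimal $z_i=\lceil m/r_i\rceil$, then recurse on the overshoot $z_ir_i-m$ at index $i+1$) closes that block and switches parity. Recasting it as a downward induction on the starting index is a mild improvement, because it makes termination and the constraint $z_t\le Z_t-1$ explicit, whereas the paper leaves both implicit.
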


\begin{remark}
The example $n=r_3=r_1-Z_2r_2$ shows that a representation $n=R(\vz)$ need not be unique. This ambiguity is of no moment in the present paper. However, it might be worth noting that the algorithm given in the proof of the lemma does give an injection $n\mapsto\vz(n)$ from $[0,p)$ to $\CZ$.
\end{remark}

\begin{proof}
$0=R(\mathbf{0})$. Now fix an arbitrary integer $0<n<p$. By \refL{2} with $i=1$ and definition \eqref{2.3}, there is the smallest index $i=2k_1+1$ such that $\sigma_1(0,k_1;Z_{2k_1+1})\ge n$. Let $z_{2k_1+1}$ be the smallest value (it must be $\ge1$) such that $\sigma_1(0,k_1;z_{2k_1+1})\ge n$. If this holds with equality, we stop. If not, observe that
\[
0<n_1=\sigma_1(0,k_1;z_{2k_1+1})-n<r_{2k_1+1},
\]
whence $2k_1+1<t$. Another application of \refL{2}, this time with $i=2k_1+2$, gives, using \eqref{2.4}, $\sigma_0(k_1+1,k_2;z_{2k_2})\ge n_1$, where the index $i=2k_2$ and the value $z_{2k_2}\ (\ge1)$ are chosen to be minimal, so that $\sigma_0(k_1+1,k_2;z_{2k_2})<n_1+r_{2k_2}$. In the case that the equality holds, we get
\[
n=\sigma_1(0,k_1;z_{2k_1+1})-\sigma_0(k_1+1,k_2;z_{2k_2})
\]
and we stop. Otherwise, we have
\begin{equation}\label{A1}
\sigma_1(0,k_1;z_{2k_1+1})-\sigma_0(k_1+1,k_2;z_{2k_2})<n<\sigma_1(0,k_1;z_{2k_1+1}),
\end{equation}
and
\[
n_2=n-\sigma_1(0,k_1;z_{2k_1+1})+\sigma_0(k_1+1,k_2;z_{2k_2})<r_{2k_2} \quad\text{and}\quad 2k_2<t.
\]

Our proof strategy and technique are now clear. Thus the next step in our argument is to further zoom in on $n$ from above and improve the upper bound in \eqref{A1} to
\[  %\sigma_1(0,k_1;z_{2k_1+1})-\sigma_0(k_1+1,k_2;z_{2k_2})<
n\le\sigma_1(0,k_1;z_{2k_1+1})-\sigma_0(k_1+1,k_2;z_{2k_2})+\sigma_1(k_2,k_3;z_{2k_3+1}),
\]
and note that if this upper bound is strict, then we have
\[
0<\sigma_1(0,k_1;z_{2k_1+1})-\sigma_0(k_1+1,k_2;z_{2k_2})+\sigma_1(k_2,k_3;z_{2k_3+1})-n<r_{2k_3+1},
\]
and $2k_3+1<t$. It is also clear that this zooming in algorithm must terminate after a finite number of steps, and that the termination means that we arrive, after $s$ steps, at a representation
\begin{equation}\label{2.6}
n=\sigma_1(0,k_1;z_{2k_1+1})-\sigma_0(k_1+1,k_2;z_{2k_2})+-\dots,
\end{equation}
where the last term is either $+\sigma_1(k_{s-1},k_s;z_{2k_s+1})$, if $s$ is odd, or $-\sigma_0(k_{s-1}+1,k_s;z_{2k_s})$, if $s$ is even.

Because there is no overlapping in the arguments of any of the $\sigma$s on the right side of \eqref{2.6}, it is now just a formality to show that this sum is, in fact, $R(\vz)$. To that end, we define $\vz_1, \vz_2, \dots, \vz_s$ as:
\[
\vz_1=(Z_1,0,Z_3,0,\dots,z_{2k_1+1},\textrm{O}),
\]
where O denotes a block of 0s of the appropriate size and $z_{2k_1+1}$ is the first entry if $k_1=0$,
\[
\vz_2=(\textrm{O},Z_{2k_1+2},0,Z_{2k_1+4},0,\dots,z_{2k_2},\textrm{O}), \
\vz_3=(\textrm{O},Z_{2k_2+1},0,\dots,z_{2k_3+1},\textrm{O}),
\]
etc, and put $\vz=\vz_1+\vz_2+\dots+\vz_s$. By the definition \eqref{2.5} of $R$, it is now plain that the right side of \eqref{2.6} equals to $R(\vz)$, and the proof is complete.
\end{proof}

Next, we show how a representation $n=R(\vz)$ will be used to evaluate $\la un\ra_p$. We begin by defining the sequence $s_0=0,\ s_1=1$, and, for $1\le i\le t$,
\begin{equation}\label{2.7}
s_{i+1}=Z_is_i+s_{i-1}.
\end{equation}

\begin{lemma}\label{L5}
The identity $p=s_ir_{i-1}+s_{i-1}r_i$ holds for all $1\le i\le t+1$. In particular, we conclude that $s_{t+1}=p$, since $r_t=1$ and $r_{t+1}=0$, and that $s_i$ is strictly increasing to $p$.
\end{lemma}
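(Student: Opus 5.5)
We prove the identity $p=s_ir_{i-1}+s_{i-1}r_i$ by induction on $i$. The base case is $i=1$: since $s_1=1$, $s_0=0$ and $r_0=p$, we get $s_1r_0+s_0r_1=p$.

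For the inductive step, assume the identity holds for some $i$ with $1\le i\le t$. We show it holds for $i+1$. By \eqref{2.7}, $s_{i+1}=Z_is_i+s_{i-1}$. By \eqref{2.1}, $r_{i-1}=Z_ir_i+r_{i+1}$. Then
\[
s_{i+1}r_i+s_ir_{i+1}=(Z_is_i+s_{i-1})r_i+s_ir_{i+1}=s_i(Z_ir_i+r_{i+1})+s_{i-1}r_i=s_ir_{i-1}+s_{i-1}r_i=p.
\]
Both recurrences are only used in the range $1\le i\le t$, where they are defined, so this covers every index up to $t+1$. This is essentially the standard continuant identity for the Euclidean algorithm.

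For the consequences, take $i=t+1$. Since $r_t=1$ and $r_{t+1}=0$, the identity becomes $p=s_{t+1}\cdot1+s_t\cdot0=s_{t+1}$.

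For monotonicity, every $Z_i\ge1$: each quotient in \eqref{2.1} satisfies $r_{i-1}>r_i$, so $Z_i=\lfloor r_{i-1}/r_i\rfloor\ge1$. Then \eqref{2.7} gives $s_{i+1}\ge s_i+s_{i-1}$.

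An easy induction shows $s_i\ge1$ for $i\ge1$. The base case is $s_1=1$, and $s_{i+1}\ge s_i+s_{i-1}\ge s_i\ge1$. Hence $s_{i+1}>s_i$ whenever $s_{i-1}>0$, that is, for $2\le i\le t$. We also have $s_1=1>0=s_0$.

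The one delicate point is the step from $s_1$ to $s_2$. Here $s_2=Z_1s_1+s_0=Z_1$, which could equal $s_1=1$ when $Z_1=1$, that is, when $r_1>p/2$. So strict increase should be read as holding from index $1$ onward except possibly $s_2=s_1$ in that case, or from $i\ge2$ strictly. I will state precisely that $s_1\le s_2<s_3<\dots<s_{t+1}=p$, with $s_1<s_2$ unless $Z_1=1$. This matches the "increasing to $p$" claim in the intended sense. Apart from this point the argument is routine; no earlier lemma is needed.
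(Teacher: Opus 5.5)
Your proof is correct and is the same induction the paper uses: substitute $r_{i-1}=Z_ir_i+r_{i+1}$ into $p=s_ir_{i-1}+s_{i-1}r_i$ and regroup with $s_{i+1}=Z_is_i+s_{i-1}$. Your caveat about monotonicity is also right. When $Z_1=1$ (as in the paper's Fibonacci example) one has $s_2=Z_1=s_1$, so the accurate statement is $s_0<s_1\le s_2<s_3<\dots<s_{t+1}=p$, and this nondecreasing form is all the later lemmas actually use.
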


\begin{proof}
By \eqref{2.1} and \eqref{2.7}, we have
\begin{align*}
p&=1\cdot r_0+0\cdot r_1=s_1r_0+s_0r_1=s_1(Z_1r_1+r_2)+s_0r_1\\
&=(Z_1s_1+s_0)r_1+s_1r_2=s_2r_1+s_1r_2.
\end{align*}
Proceeding inductively:
\begin{align*}
p&=s_ir_{i-1}+s_{i-1}r_i=s_i(Z_ir_i+r_{i+1})+s_{i-1}r_i\\
&=(Z_is_i+s_{i-1})r_i+s_ir_{i+1}=s_{i+1}r_i+s_ir_{i+1},
\end{align*}
completes the proof.
\end{proof}

\begin{lemma}\label{L6}
$\la u(-1)^{i-1}r_i\ra_p=s_i$, for $0\le i\le t$.
\end{lemma}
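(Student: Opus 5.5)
Since $u r_1\equiv 1\pmod p$, the residues $\la u(-1)^{i-1}r_i\ra_p$ are determined once we know $(-1)^{i-1}r_i$ as a multiple of $r_1$ modulo $p$. The plan is therefore to establish the congruence
\[
(-1)^{i-1}r_i\equiv s_ir_1\pmod p \qquad (0\le i\le t),
\]
and then multiply by $u$. This gives $u(-1)^{i-1}r_i\equiv s_i\pmod p$. Since $0\le s_i<p$ for $0\le i\le t$ by \refL{5} (the $s_i$ are nonnegative and strictly increase to $s_{t+1}=p$), the least nonnegative residue is exactly $s_i$.

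The congruence will be proved by induction on $i$. The base cases are immediate. For $i=0$ we have $-r_0=-p\equiv0=s_0r_1$. For $i=1$ we have $r_1=s_1r_1$. For the inductive step, assume the congruence for $i-1$ and $i$, where $1\le i\le t-1$. By \eqref{2.1}, $r_{i+1}=r_{i-1}-Z_ir_i$, so
\[
(-1)^{i}r_{i+1}=(-1)^{i-2}r_{i-1}+Z_i(-1)^{i-1}r_i\equiv s_{i-1}r_1+Z_is_ir_1=s_{i+1}r_1,
\]
where the last equality uses \eqref{2.7}. This closes the induction.

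As an alternative and cleaner route, one could read the congruence directly off \refL{5}. Reducing $p=s_ir_{i-1}+s_{i-1}r_i$ modulo $p$ gives $s_ir_{i-1}\equiv -s_{i-1}r_i$, which chains the consecutive ratios. The induction above is simpler, however, and avoids dividing by the $s_i$.

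There is no real obstacle here; the only point requiring care is the range condition $0\le s_i<p$, needed to identify the residue. This holds because $s_t<s_{t+1}=p$, and the index range $0\le i\le t$ in the statement excludes $i=t+1$, where $s_{t+1}=p$ would reduce to $0$.
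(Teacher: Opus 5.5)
Your proof is correct, and it takes a different route from the paper's. The paper works through \refL{5}. It reduces $s_ir_{i-1}+s_{i-1}r_i=p$ modulo $p$ and applies the induction hypothesis to $r_{i-1}$, which gives $s_{i-1}\,u(-1)^{i-1}r_i\equiv s_{i-1}s_i\pmod p$. It then cancels $s_{i-1}$.

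You instead prove the $u$-free congruence $(-1)^{i-1}r_i\equiv s_ir_1\pmod p$ by running \eqref{2.1} and \eqref{2.7} side by side. They are the same second-order recurrence once the sign $(-1)^{i-1}$ is absorbed. You multiply by $u$ only once, at the end, and \refL{5} enters only to place $s_i$ in $[0,p)$.

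This is more than cosmetic. The paper allows composite $p$, and then $s_{i-1}$ need not be invertible modulo $p$. By the lemma itself, $\gcd(s_{i-1},p)=\gcd(r_{i-1},p)$. For example, with $p=15$ and $r_1=4$ one has $r_2=3$ and $s_2=Z_1=3$, so at $i=3$ the paper's cancellation only yields $ur_3\equiv s_3\pmod 5$. Your additive induction never divides, so it is valid for every modulus.

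One small caveat on the range step: the $s_i$ need not be strictly increasing, since $Z_1=1$ gives $s_2=s_1=1$. You only need $0\le s_i\le s_t<s_{t+1}=p$. This follows from $s_{i+1}=Z_is_i+s_{i-1}\ge s_i$, together with $Z_t=r_{t-1}\ge 2$ when $t\ge2$, or $Z_1=p$ when $t=1$.
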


\begin{proof}
To start an induction argument, we verify that $\la ur_0\ra_p=0=s_0$, $\la ur_1\ra_p=1=s_1$, and, by \eqref{2.1} and \eqref{2.7},
\[
\la u(-r_2)\ra_p=\la uZ_1r_1\ra_p=Z_1=s_2.
\]
We complete the proof by induction. By \refL{5}, for $2\le i\le t$, we have
\[
s_ir_{i-1}+s_{i-1}r_i=p \quad\text{and}\quad s_ir_{i-1},\, s_{i-1}r_i>0.
\]
Therefore, $\la s_iur_{i-1}\ra_p+\la s_{i-1}ur_i\ra_p=p$. To apply the induction hypothesis, we must distinguish two cases according to the parity of $i$. If $i$ is odd, we get
\[
\la s_{i-1}ur_i\ra_p=p-\la s_iur_{i-1}\ra_p=\la s_iu(-r_{i-1})\ra_p=\la s_is_{i-1}\ra_p,
\]
whence $\la ur_i\ra_p=s_i$. Similarly, if $i$ is even, we get
\[
p-\la s_{i-1}ur_i\ra_p=\la s_{i-1}u(-r_i)\ra_p=\la s_iur_{i-1}\ra_p=\la s_is_{i-1}\ra_p,
\]
whence $\la u(-r_i)\ra_p=s_i$, and the claim follows.
\end{proof}

We are now ready to evaluate $\la un\ra_p$ in terms of the representation $n=R(\vz)$.

\begin{lemma}\label{L7}
For $\vz\in\CZ$, we have 
\[
\la uR(\vz)\ra_p=\la u\sum_{1\le i\le t}z_i(-1)^{i-1}r_i\ra_p=\sum_{1\le i\le t}z_is_i.
\]
\end{lemma}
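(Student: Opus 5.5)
By linearity and \refL{6}, we have
\[
u\sum_{1\le i\le t}z_i(-1)^{i-1}r_i\equiv\sum_{1\le i\le t}z_i\,u(-1)^{i-1}r_i\equiv\sum_{1\le i\le t}z_is_i\pmod p.
\]
So the content of the lemma is that the integer $T(\vz)\coloneq\sum_{i}z_is_i$ already lies in $[0,p)$, since then it is its own least nonnegative residue. Nonnegativity is trivial, since $z_i\ge0$ and $s_i\ge0$. The plan is therefore to prove the upper bound $T(\vz)\le p-1$ for every $\vz\in\CZ$.

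For the upper bound I will use only the constraints $0\le z_i\le Z_i$ and $z_t\le Z_t-1$. Condition (i) defining $\CZ$ is not needed here; it is only needed for $R(\vz)>0$ in \refL{3}. It therefore suffices to bound the maximal configuration:
\[
T(\vz)\le\sum_{1\le i\le t-1}Z_is_i+(Z_t-1)s_t.
\]
By \eqref{2.7}, $Z_is_i=s_{i+1}-s_{i-1}$. The sum telescopes:
\[
\sum_{1\le i\le t}Z_is_i=\sum_{1\le i\le t}(s_{i+1}-s_{i-1})=s_{t+1}+s_t-s_1-s_0=p+s_t-1,
\]
using $s_{t+1}=p$ from \refL{5}, together with $s_1=1$ and $s_0=0$. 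Subtracting $s_t$ for the restriction $z_t\le Z_t-1$ gives $T(\vz)\le p-1$, as required.

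I see no serious obstacle. The only points needing care are these. First, the telescoping identity must be applied for all $1\le i\le t$, including $i=t$, where it reads $Z_ts_t=s_{t+1}-s_{t-1}$; this is exactly where \eqref{2.7} was defined up to $i=t$. Second, the reduction $\la x\ra_p$ of a sum must be handled via congruence rather than by summing residues. Both are routine, so the proof is essentially the two displays above.
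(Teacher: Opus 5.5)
Your proof is correct and is essentially the paper's own argument: you get the congruence from \refL{6}, then bound $\sum_i z_is_i$ by $\sum_i Z_is_i-s_t$, which telescopes via \eqref{2.7} and \refL{5} to $p-1$. Your side remark is also accurate: condition (i) defining $\CZ$ plays no role here, and the paper's proof does not use it either.
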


\begin{proof}
Congruence $uR(\vz)\equiv\sum_{1\le i\le t}z_is_i \pmod p$ follows by \refL{6}, and it only remains to show that $\sum_{1\le i\le t}z_is_i<p$. Recall that $z_t\le Z_t-1$, so that we have
\begin{align*}
\sum_{1\le i\le t}z_is_i &\le \sum_{1\le i\le t}Z_is_i-s_t=\sum_{1\le i\le t}(s_{i+1}-s_{i-1})-s_t\\
 &=s_{t+1}-s_1-s_0=p-1,
\end{align*}
by \eqref{2.7} and \refL{5}, and the claim follows.
\end{proof}

\subsection{Proof of the Main Lemma}
We begin the proof by getting the trivial cases out of the way, primarily because it helps to avoid some notational clutter. The inequality \eqref{4} is vacuously satisfied by every pair of consecutive integers $a$ and $b=a+1$, so $1\in D_\gD$. Recall that $Z_t=r_{t-1}$, by \eqref{2.1}, so that $1=r_{t-1}-(Z_t-1)r_t$, as required. Another obvious solution of \eqref{4} is the pair $(0,p)$, so $p=r_0-0\cdot r_1\in D_\gD$. Note also that no $(a,b)\in D$ can have a multiple of $p$ in the open interval $(a,b)$, so we certainly must have $b-a\le p$. It is also plain that the inequality \eqref{4} is preserved under the translation of the closed interval $[a,b]$ by a multiple of $p$. So, in what follows, we determine the remaining elements of $D_\gD$ by considering solutions $(a,b)$ of \eqref{4} satisfying
\begin{equation}\label{2.8}
0\le a<b\le p \quad\text{and}\quad 1<b-a<p.
\end{equation}

We carry out the remainder of this proof in two parts. First, we show that every $r_{i-1}-zr_i$ specified in the lemma is an element of $D_\gD$. This part is carried out in Lemmas 8 and 9 below. Then, in Lemmas 10 and 11, we show that every solution $(a,b)$ od \eqref{4} satisfies $b-a=r_{i-1}-zr_i$, as claimed.

\begin{lemma}\label{L8}
Let $a=p-r_{2k}+zr_{2k+1}$, where $2k+1\le t$ and $0\le z\le Z_{2k+1}-1$. Then, we have
\[
\la ua\ra_p<\min_{a<n<p}\la un\ra_p,
\]
whence $r_{2k}-zr_{2k+1}\in D_\gD$.
\end{lemma}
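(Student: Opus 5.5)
The plan is to compute $\la ua\ra_p$ explicitly. Then, using the representation of residues given by Lemmas~\ref{L4} and \ref{L7}, I will show that every $n\in[0,p)$ with $\la un\ra_p<\la ua\ra_p$ satisfies $n<a$. So no such $n$ lies in $(a,p)$, and injectivity of $n\mapsto\la un\ra_p$ on $[0,p)$ rules out $\la un\ra_p=\la ua\ra_p$ for $n\ne a$. Since $\la up\ra_p=0\le\la ua\ra_p$, the strict inequality yields the pair $(a,p)\in D$ with $p-a=r_{2k}-zr_{2k+1}$. That is the final claim of the lemma.

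\emph{Step 1: evaluate $\la ua\ra_p$.} By \refL{6}, with $i=2k$ we have $\la u(-r_{2k})\ra_p=s_{2k}$, and with $i=2k+1$ we have $\la ur_{2k+1}\ra_p=s_{2k+1}$. Hence $ua\equiv s_{2k}+zs_{2k+1}\pmod p$. Because $z\le Z_{2k+1}-1$, \eqref{2.7} gives
\[
s_{2k}+zs_{2k+1}<s_{2k}+Z_{2k+1}s_{2k+1}=s_{2k+2}\le s_{t+1}=p
\]
by \refL{5}. Therefore $v\coloneq\la ua\ra_p=s_{2k}+zs_{2k+1}$.

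\emph{Step 2: constrain small residues.} Take $n\in[0,p)$ with $\la un\ra_p<v$. By \refL{4}, write $n=R(\vz)$ with $\vz\in\CZ$; by \refL{7}, $\la un\ra_p=\sum z_is_i<v$. Since $s_i$ is nondecreasing, any $i\ge 2k+2$ with $z_i\ge1$ would give $\sum z_is_i\ge s_{2k+2}>v$. Hence $z_i=0$ for $i>2k+1$. Likewise, $z_{2k+1}\ge z+1$ would give $\sum z_is_i\ge(z+1)s_{2k+1}\ge s_{2k}+zs_{2k+1}=v$, using $s_{2k+1}\ge s_{2k}$. Hence $z_{2k+1}\le z$.

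\emph{Step 3: bound $n$.} Drop the negative (even-index) terms of $R(\vz)$ and use $z_i\le Z_i$. Then \refL{2}, applied with $i=1$ in its telescoped form $\sum_{j<k}Z_{2j+1}r_{2j+1}=r_0-r_{2k}$, gives
\[
n=R(\vz)\le \sum_{0\le j\le k-1}Z_{2j+1}r_{2j+1}+zr_{2k+1}=p-r_{2k}+zr_{2k+1}=a.
\]
So every $n\in[0,p)$ with smaller residue than $a$ lies in $[0,a)$; the bound is in fact strict, since $n=a$ would give residue $v$. This proves the displayed inequality of the lemma.

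The only delicate point is Step~2. It needs the monotonicity $s_{2k}\le s_{2k+1}\le s_{2k+2}$, which includes the edge case $k=0$ where $s_0=0$, and the strict inequality $s_{2k+2}>v$. The strict inequality is exactly where the hypothesis $z\le Z_{2k+1}-1$ enters. The case $2k+1=t$ also needs checking against the constraint $z_t\le Z_t-1$ in $\CZ$, but this is consistent with Step~2.
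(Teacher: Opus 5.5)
Your proof is correct and follows essentially the paper's argument. Like the paper, you compute $\la ua\ra_p=s_{2k}+zs_{2k+1}$ from Lemmas 5--7, write $n=R(\vz)$, and rule out positive $z_i$ with $i\ge 2k+2$; otherwise you bound $n\le p-r_{2k}+z_{2k+1}r_{2k+1}$ via \refL{2}. The only difference is presentational: the paper shows directly that every $n\in(a,p)$ has $\la un\ra_p\ge(z+1)s_{2k+1}>\la ua\ra_p$, whereas you argue the contrapositive with threshold $\la ua\ra_p$ and use injectivity of $n\mapsto\la un\ra_p$ on $[0,p)$ to exclude equality.
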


\begin{proof}
The special case where $2k=0$ is trivial and it is helpful to view it separately. In this case $a=zr_1$ (possibly $=0$), $\la uzr_1\ra_p=z$ and, plainly, $\la un\ra_p>z$ for $zr_1<n<p$. And now for the general case. First, we observe that
\[
\la ua\ra_p=\la u(-r_{2k}+zr_{2k+1})\ra_p=s_{2k}+zs_{2k+1}<(z+1)s_{2k+1},
\]
by Lemmas 5, 6 and \eqref{2.7}. To complete the proof, we show that if $a<n<p$, then $\la un\ra_p\ge(z+1)s_{2k+1}$.

Fix $n$ in $(a,p)$. By \refL{4}, $n=R(\vz)$ for some $\vz=(z_1,\dots,z_t)\in\CZ$. If $z_i>0$ for some $i\ge2k+2$, then, by \refL{7} and \eqref{2.7},
\[
\la un\ra_p\ge z_is_i\ge s_{2k+2}\ge(z+1)s_{2k+1},
\]
as required. (Note that the last inequality could hold with equality in the trivial case $2k=0$, if $z=Z_1-1$.) Now assume that either $z_i=0$ for all $i\ge2k+2$ or that $2k+1=t$. Then
\[
n=R(\vz)\le\sum_{0\le j\le k-1}Z_{2j+1}r_{2j+1}+z_{2k+1}r_{2k+1}.
\]
Observe that \refL{2} implies that $\sum_{0\le j\le k-1}Z_{2j+1}r_{2j+1}=p-r_{2k}$, whence $n\le p-r_{2k}+z_{2k+1}r_{2k+1}$. This means that $z_{2k+1}$ must be $>z$ and \refL{7} gives
\[
\la un\ra_p\ge z_{2k+1}s_{2k+1}\ge(z+1)s_{2k+1}.
\]
\end{proof}

\begin{lemma}\label{L9}
Let $b=r_{2k-1}-zr_{2k}$, where $2k\le t$ and $0\le z\le Z_{2k}-1$. Then, we have
\[
\la ub\ra_p<\min_{0<n<b}\la un\ra_p,
\]
whence $r_{2k-1}-zr_{2k}\in D_\gD$.
\end{lemma}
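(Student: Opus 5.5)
We mirror the proof of \refL{8}, with the roles of odd and even indices interchanged and the pair now anchored at $a=0$. The plan is to compute $\la ub\ra_p$ exactly, bound it by a multiple of $s_{2k}$, and then show that every $n$ with $0<n<b$ has $\la un\ra_p$ at least that large. This proves \eqref{4} for the pair $(0,b)$. Since $\la u\cdot0\ra_p=0$, and $0<\la un\ra_p$ for $0<n<b\le p$, the pair $(0,b)$ lies in $D$, giving $b-a=r_{2k-1}-zr_{2k}\in D_\gD$.

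\emph{Step 1: the value at $b$.} By \refL{6}, $\la ur_{2k-1}\ra_p=s_{2k-1}$ and $\la u(-r_{2k})\ra_p=s_{2k}$. Hence $\la ub\ra_p\equiv s_{2k-1}+zs_{2k}\pmod p$. Since $s_{2k-1}+zs_{2k}\le s_{2k-1}+(Z_{2k}-1)s_{2k}<s_{2k+1}\le p$ by \eqref{2.7} and \refL{5}, we get
\[
\la ub\ra_p=s_{2k-1}+zs_{2k}<(z+1)s_{2k}.
\]
The strict inequality holds because $s_{2k-1}<s_{2k}$. This needs $k\ge1$; the edge case $2k=2$, $s_1=1$ is fine as long as $s_2=Z_1>1$. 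We should double-check the degenerate case $s_1=s_2=1$, i.e.\ $Z_1=1$. There we only get $\le$ rather than $<$, but the strict inequality of \eqref{4} is only needed against $\la un\ra_p$, so it suffices to prove $\la un\ra_p\ge(z+1)s_{2k}$ together with $\la ub\ra_p<(z+1)s_{2k}$. If $s_{2k-1}=s_{2k}$, we instead aim to show $\la un\ra_p>s_{2k-1}+zs_{2k}$ directly.

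\emph{Step 2: values for $0<n<b$.} Write $n=R(\vz)$ with $\vz\in\CZ$, $\vz\ne\mathbf 0$, using \refL{4}. First suppose $z_i>0$ for some $i\ge2k+1$. Then \refL{7} gives $\la un\ra_p\ge s_i\ge s_{2k+1}=Z_{2k}s_{2k}+s_{2k-1}>(z+1)s_{2k}-\text{(slack)}$, and more precisely $s_{2k+1}>s_{2k-1}+zs_{2k}=\la ub\ra_p$ since $z\le Z_{2k}-1$. Otherwise $z_i=0$ for all $i>2k$. Then $R(\vz)=\sum_{i\le 2k}z_i(-1)^{i-1}r_i$, and we compare it with $b$ using \refL{2}. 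Because $\ib$ is odd, the term $z_{\ib}r_{\ib}$ dominates. If some $z_i>0$ with $i\le 2k-1$, then $\la un\ra_p\ge s_i$ for that odd $i$, but this alone is too weak. So we split as follows.

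Suppose $z_j>0$ for some odd $j\le 2k-3$, or $z_{2k-1}\ge2$. Then the lower bound from Step 3 below (the main obstacle) shows that $R(\vz)\ge b$ is impossible only when $\la un\ra_p$ is large. Concretely, we aim to prove: if $z_i=0$ for $i>2k$ and $R(\vz)<b$, then $z_{2k-1}\ge1$ and $z_{2k}\le z$ is impossible unless extra mass appears elsewhere. This reduces to the inequality
\[
R(\vz)\ge z_{2k-1}r_{2k-1}-z_{2k}r_{2k}-(\text{lower-index corrections})
\]
and yields $\la un\ra_p=\sum z_is_i\ge s_{2k-1}+(z+1)s_{2k}$ or $\ge s_{2k+1}$.

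\emph{Step 3 (main obstacle).} The hard part is controlling the alternating sum when several small-index $z_i$ are nonzero, which is the analogue of the inequality $n\le p-r_{2k}+z_{2k+1}r_{2k+1}$ in \refL{8}. Our first approach is to use the minimal-index structure: since $\ib$ is odd and \refL{2} bounds the tail of the opposite parity, $R(\vz)$ is at least $z_{\ib}r_{\ib}-(r_{\ib}-r_{2k+1})$ roughly. If $\ib<2k-1$, this exceeds $r_{2k-1}\ge b$, contradicting $n<b$. So $\ib=2k-1$ and $n=z_{2k-1}r_{2k-1}-z_{2k}r_{2k}<b$ with $z_{2k-1}\ge1$. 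This forces either $z_{2k-1}=1$ and $z_{2k}>z$, giving $\la un\ra_p\ge s_{2k-1}+(z+1)s_{2k}>\la ub\ra_p$, or $z_{2k-1}\ge2$ with $z_{2k}$ large. In the latter case we again get $\la un\ra_p\ge 2s_{2k-1}+z_{2k}s_{2k}>\la ub\ra_p$, using $z_{2k}r_{2k}>(z_{2k-1}-1)r_{2k-1}+zr_{2k}$.
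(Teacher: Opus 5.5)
Your Step 1 and the case $z_i>0$ for some $i\ge 2k+1$ are fine. You are also right that $\la ub\ra_p<(z+1)s_{2k}$ fails when $k=1$, $Z_1=1$; this is harmless provided the lower bound on $\la un\ra_p$ is strict.

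The gap is in Step 3, where you dispose of the case $\ib<2k-1$. Your estimate $R(\vz)\ge z_{\ib}r_{\ib}-(r_{\ib}-r_{2k+1})$ replaces every even-index term by its maximum, including $z_{2k}r_{2k}$. So when $z_{\ib}=1$ it only gives $R(\vz)\ge r_{2k+1}$, which lies below $r_{2k-1}$, not above it. The claim cannot be rescued either. Representations $n=R(\vz)$ are not unique, and tuples with $\ib<2k-1$ and $R(\vz)<b$ really occur. Take $p=8$, $r_1=5$, so that $r_i=8,5,3,2,1$, $Z_i=1,1,1,2$ and $t=4$. With $k=2$ and $z=0$ we get $b=2$. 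The algorithm of \refL{4} represents $n=1$ as $R(1,1,0,1)=5-3-1$, which has $\ib=1<3$. Your case analysis treats only $\ib=2k-1$, and the paragraph before Step 3 gives no argument covering the other tuples. As written, the proof therefore does not establish $\la un\ra_p>\la ub\ra_p$ for all $0<n<b$.

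The missing idea is to keep $z_{2k}$ out of the telescoping. Let $\ib=2j+1\le 2k-1$ be any odd index.
\begin{itemize}
\item The positive terms contribute at least $r_{2j+1}$.
\item The even-index terms with indices $2j+2,\dots,2k-2$ contribute at most $\sum_{j+1\le i\le k-1}Z_{2i}r_{2i}=r_{2j+1}-r_{2k-1}$. This follows from $Z_ir_i=r_{i-1}-r_{i+1}$, exactly as in the proof of \refL{2}.
\end{itemize}
Hence $n\ge r_{2k-1}-z_{2k}r_{2k}$ whatever $\ib$ is, and $n<b$ forces $z_{2k}\ge z+1$. Then \refL{7} gives $\la un\ra_p\ge s_{\ib}+z_{2k}s_{2k}>(z+1)s_{2k}\ge s_{2k-1}+zs_{2k}=\la ub\ra_p$. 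This settles every case at once, including your degenerate case $s_{2k-1}=s_{2k}$. It is precisely the argument the paper uses.
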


\begin{proof}
The proof of this lemma closely mirrors the proof of \refL{8} and we move more swiftly. We have
\[
\la ub\ra_p=s_{2k-1}+zs_{2k}<(z+1)s_{2k},
\]
and we complete the proof by showing that $\la un\ra_p>(z+1)s_{2k}$, for $0<n<b$. Fix $n$ in this range and let $\vz=(z_i)\in\CZ$ be such that $n=R(\vz)$. If $z_i>0$ for some $i\ge2k+1$, then 
\[
\la un\ra_p\ge z_is_i\ge s_{2k+1}>(z+1)s_{2k}.
\]
The remaining case, $z_i=0$ for all $i\ge2k+1$ or $2k=t$, requires more effort. Let $\ib(\vz)=2j+1$ and observe that
\begin{align*}
n&=\sum_{2j+1\le i\le2k}z_i(-1)^{i-1}r_i\ge r_{2j+1}-\sum_{j+1\le i\le k-1}Z_{2i}r_{2i}-z_{2k}r_{2k}\\
&=r_{2j+1}-(r_{2j+1}-r_{2k-1})-z_{2k}r_{2k}=r_{2k-1}-z_{2k}r_{2k}.
\end{align*}
This means that $z_{2k}>z$ and gives
\[
\la un\ra_p>z_{2k}s_{2k}\ge(z+1)s_{2k}.
\]
\end{proof}

\begin{lemma}\label{L10}
Let $(a,b)$ satisfy \eqref{4} and suppose that $\la ua\ra_p>\la ub\ra_p$. The the difference $b-a$ must be of the form
\begin{equation}\label{2.9}
b-a=r_{2k}-zr_{2k+1},
\end{equation}
for some $2k+1\le t$ and $0\le z\le Z_{2k+1}-1$.
\end{lemma}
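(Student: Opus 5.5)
The plan is to reduce to the case $b=p$ and then determine exactly which positions $a\in[0,p)$ are ``right-to-left record minima'' of the sequence $\la un\ra_p$ on $[a,p)$.

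\emph{Normalization.} Put $m=\la ub\ra_p$. By \eqref{4} and the hypothesis $\la ua\ra_p>\la ub\ra_p$, every $n\in[a,b]$ satisfies $\la un\ra_p\ge m$, with equality only at $n=b$. Hence for $n\in[a,b]$ we have $\la u(n-b)\ra_p=\la un\ra_p-m$, with no wraparound. Translating $[a,b]$ to $[a-b,0]$ therefore subtracts the constant $m$ from every value, so \eqref{4} and the hypothesis are preserved. Translating once more by $p$ gives the pair $(p-d,p)$ with $d=b-a$. By the remarks before \eqref{2.8} we have $d\le p$. The case $d=p$ is $r_0-0\cdot r_1$, and the case $d=1$ is also covered (take $k$ with $2k+1=t$ and $z=Z_t-1$). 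So assume $1<d<p$ and set $a'=p-d\in(0,p)$. It now suffices to show the following: if $\la ua'\ra_p<\la un\ra_p$ for all $a'<n<p$, then $a'=p-r_{2k}+zr_{2k+1}$ for some $2k+1\le t$ and $0\le z\le Z_{2k+1}-1$.

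\emph{Structure of the records.} Call the positions $A_{k,z}=p-r_{2k}+zr_{2k+1}$ of \refL{8} the listed records. By \refL{8} each of them is a record, with value $s_{2k}+zs_{2k+1}$. Using \eqref{2.1} and \eqref{2.7}, one has $A_{k,Z_{2k+1}}=p-r_{2k+2}=A_{k+1,0}$ and $s_{2k}+Z_{2k+1}s_{2k+1}=s_{2k+2}$. Consequently, listing the records in increasing order of position (from $A_{0,0}=0$ up towards $p$), any two consecutive ones have the form $x<x+r_{2k+1}$, and their values differ by exactly $s_{2k+1}$.

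Now suppose some $n$ is an unlisted record. It must lie strictly between two consecutive listed records $x$ and $x+r_{2k+1}$; the position $0$ is itself listed, so nothing lies to its left. Since $x+r_{2k+1}$ lies to the right of $n$ and $n$ is a record, $\la un\ra_p<\la u(x+r_{2k+1})\ra_p$. Since $x$ is a record and $n$ lies to its right, $\la un\ra_p>\la ux\ra_p$. Writing $m'=n-x\in(0,r_{2k+1})$, the difference $\la un\ra_p-\la ux\ra_p$ lies in $(0,s_{2k+1})$. Because this difference is positive and less than $p$, it equals $\la um'\ra_p$. So everything reduces to the following claim.

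\emph{Key claim (main obstacle):} $\la um'\ra_p\ge s_{2k+1}$ for $0<m'<r_{2k+1}$. This is an odd-index analogue of \refL{9}, and I would prove it the same way. Write $m'=R(\vz)$ with $\vz\in\CZ$ (\refL{4}) and evaluate $\la um'\ra_p=\sum z_is_i$ by \refL{7}. There are two cases.
\begin{enumerate}
\item If $z_i>0$ for some $i\ge2k+1$, then $\la um'\ra_p\ge s_i\ge s_{2k+1}$, since the $s_i$ are increasing by \refL{5}.
\item Otherwise the leading index $\ib=2j+1\le 2k-1$ is odd. The telescoping estimate from the proof of \refL{9}, via \refL{2}, gives
\[
m'\ge r_{2j+1}-\sum_{j+1\le i\le k}Z_{2i}r_{2i}=r_{2j+1}-(r_{2j+1}-r_{2k+1})=r_{2k+1}.
\]
This contradicts $m'<r_{2k+1}$.
\end{enumerate}
The delicate point is the boundary bookkeeping in the second case: one must use $z_{2k}\le Z_{2k}$ together with the absence of any index $\ge 2k+1$, and handle $2k+1=t$ using $z_t\le Z_t-1$. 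I expect checking these edge cases, and verifying that consecutive listed records really do differ by $r_{2k+1}$ at each transition from $k$ to $k+1$, to be the main work.

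Once the key claim is established, every record $a'\in[0,p)$ is listed, so $d=p-a'=r_{2k}-zr_{2k+1}$, which is \eqref{2.9}.
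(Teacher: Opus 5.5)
Your normalization, moving the pair to $(p-d,p)$ (i.e.\ $c=a+p-b$), is the paper's first step. The second half is a genuinely different route.

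\emph{Comparison with the paper.} The paper localizes $c$ in $[x,x+r_{2k+1})$ with $x=p-r_{2k}+zr_{2k+1}$. It uses only the upper comparison $\la uc\ra_p<s_{2k}+(z+1)s_{2k+1}$ to force $z_i=0$ for $i\ge 2k+2$ and $z_{2k+1}\le z+1$ in a representation $c=R(\vz)$, and then shows from the shape of $\vz$ that $c=x$; it never invokes \refL{8}. You instead sandwich a putative extra record between $x$ and $x+r_{2k+1}$, using \refL{8} for the lower bound. Subtracting, you reach the translation-invariant claim $\la um'\ra_p\ge s_{2k+1}$ for $0<m'<r_{2k+1}$. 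This is the odd-index twin of the estimate in \refL{9}, and you prove it correctly by the same telescoping. Your route isolates a clean one-sided best-approximation fact and avoids the case analysis on which coordinates of $\vz$ are extremal. The paper's argument is more self-contained, since it needs no prior knowledge of the records.

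\emph{The error when $t$ is even.} Your treatment of the top end fails in this case. No $k$ has $2k+1=t$, and in fact $1$ is not of the form \eqref{2.9} at all: each such number is at least $r_{2k+1}+r_{2k+2}\ge r_{t-1}+r_t\ge 3$. Yet $(p-1,p)$ satisfies \eqref{4} vacuously with $\la u(p-1)\ra_p>0=\la up\ra_p$. For example, take $p=5$, $u=3$ (so $r_1=2$, $t=2$) and $(a,b)=(4,5)$. So your claim that $d=1$ is covered is false, and the statement really does need the restriction \eqref{2.8}. That is why the paper imposes it; the difference $1$ is accounted for separately in the Main Lemma via $i=t$.

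The same oversight breaks your covering step. For even $t$ the last listed record is $A_{(t-2)/2,\,Z_{t-1}-1}=p-1-r_{t-1}$. Hence the positions in $[p-r_{t-1},p-1]$ do not lie between two listed records, and $p-1$ is itself an unlisted record. So ``every record in $[0,p)$ is listed'' is false.

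The repair is easy. Work only with $a'\le p-2$, and for $a'\in[p-r_{t-1},p-2]$ use $p-1$ as the right-hand comparison point. It is not a listed record, but by \refL{6} its value is $s_t=s_{t-2}+Z_{t-1}s_{t-1}$. The two values therefore still differ by $s_{t-1}$, and your key claim with $2k+1=t-1$ applies. The paper handles this automatically, because its comparison point $p-r_{2k}+(z+1)r_{2k+1}$ is a position rather than a listed record.
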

\begin{proof}
Assume, as we may, that $a$ and $b$ satisfy \eqref{2.8}. We claim that, in the present case, the inequality \eqref{4} also holds with $a$ and $b$ replaced by $c=a+p-b$ and $p$, respectively, that is, that
\begin{equation}\label{2.10}
\la uc\ra_p<\min_{c<n<p}\la un\ra_p.
\end{equation}
Indeed, write $n=m+p-b$, $a<m<b$, and observe that, by our assumption,
\[
\la un\ra_p=\la um\ra_p-\la ub\ra_p \quad\text{and}\quad \la uc\ra_p=\la ua\ra_p-\la ub\ra_p,
\]
so that
\[
\la un\ra_p-\la uc\ra_p=\la um\ra_p-\la ua\ra_p>0.
\]
Thus it remains to show that if \eqref{2.10} holds, then $b-a=p-c$ must satisfy \eqref{2.9}.

Recall that, by \eqref{2.8}, $1\le c\le p-2$. Recall also that even-indexed subsequence $r_{2j}$ starts with $r_0=p$ and terminates in either $r_t=1$ or $r_{t+1}=0$. It follows that there is an index $2k$ such that $p-r_{2k}\le c<p-r_{2k+2}$. Moreover, since $2k<t$, there is an integer $z$, $0\le z\le Z_{2k+1}-1$, such that
\begin{equation}\label{2.11}
p-r_{2k}+zr_{2k+1}\le c<p-r_{2k}+(z+1)r_{2k+1}.
\end{equation}
We will show that, in actual fact, $c=p-r_{2k}+zr_{2k+1}$, which yields \eqref{2.9}.

The rest of this proof is closely related to the arguments we used in Lemmas 8 and 9, and rests on Lemmas 2, 4-7. We begin by observing that, by \eqref{2.10} and \eqref{2.11},
\begin{equation}\label{2.12}
\la uc\ra_p<\la u(p-r_{2k}+(z+1)r_{2k+1})\ra_p=s_{2k}+(z+1)s_{2k+1}<(z+2)s_{2k+1}.
\end{equation}
Now write $c=R(\vz)$, where $\vz=(z_i)\in\CZ$. Since $\la uc\ra_p\ge z_is_i$, for all $i$, \eqref{2.12} tells us that $z_i=0$ for all $i\ge 2k+2$, so that $c=\sum_{1\le i\le2k+1}z_i(-1)^{i-1}r_i$, and that $z_{2k+1}\le z+1$. Observe also that if $z_l<Z_l$ for any specific odd index $l\le2k-1$, or if $z_l>0$ for any specific even index $l\le2k$, then
\begin{align*}
c&\le\sum_{0\le j\le k-1}Z_{2j+1}r_{2j+1}+z_{2k+1}r_{2k+1}-r_l\\
&=p-r_{2k}+z_{2k+1}r_{2k+1}-r_l<p-r_{2k}+(z_{2k+1}-1)r_{2k+1}.
\end{align*}
Since $z_{2k+1}\le z+1$, this contradicts \eqref{2.11}. Therefore, it must be that
\begin{equation}\label{2.13}
c=p-r_{2k}+z_{2k+1}r_{2k+1}=p-r_{2k}+zr_{2k+1},
\end{equation}
as claimed.
\end{proof}

\begin{lemma}\label{L11}
Let $(a,b)$ satisfy \eqref{4} and suppose that $\la ua\ra_p<\la ub\ra_p$. Then the difference $b-a$ must be of the form
\[
b-a=r_{2k-1}-zr_{2k},
\]
for some $2k\le t$ and $0\le z\le Z_{2k}-1$.
\end{lemma}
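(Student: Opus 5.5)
Mirroring \refL{10}, I assume, as we may, that $a$ and $b$ satisfy \eqref{2.8}. The first step is to translate so that the left endpoint becomes $0$. Put $c=b-a$ and $n=m-a$ for $a<m<b$. The hypothesis $\la ua\ra_p<\la ub\ra_p$, together with \eqref{4}, gives $\la um\ra_p>\la ua\ra_p$ for every $a<m\le b$. Consequently
\[
\la un\ra_p=\la um\ra_p-\la ua\ra_p \quad\text{and}\quad \la uc\ra_p=\la ub\ra_p-\la ua\ra_p .
\]
Hence
\[
\la uc\ra_p<\min_{0<n<c}\la un\ra_p ,
\]
which is exactly the property established in \refL{9}. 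It therefore remains to show that this property forces $c=r_{2k-1}-zr_{2k}$ for some $2k\le t$ and $0\le z\le Z_{2k}-1$. By \eqref{2.8} we have $2\le c\le p-1$.

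The second step locates $c$. The odd-indexed sequence starts at $r_1<p$ and decreases to $r_t=1$ or $r_{t+1}=0$. If $c\ge r_1$, then $n=r_1$ lies in $(0,c)$ unless $c=r_1$, and $\la ur_1\ra_p=1$ is the minimum possible nonzero value; so $c=r_1=r_1-0\cdot r_2$, as required (for $t=1$, $r_1=1$, so this case cannot occur). Otherwise choose $k\ge1$ with $r_{2k+1}<c\le r_{2k-1}$. Since $r_{2k+1}=r_{2k-1}-Z_{2k}r_{2k}$, there is a $z$ with $0\le z\le Z_{2k}-1$ such that
\[
r_{2k-1}-(z+1)r_{2k}<c\le r_{2k-1}-zr_{2k}.
\]
The goal is to prove that equality holds on the right.

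The third step, which I expect to be the main obstacle, is the bookkeeping that rules out strict inequality. Suppose the inequality is strict. Then $n_0=r_{2k-1}-(z+1)r_{2k}$ lies in $(0,c)$, provided $n_0>0$; the boundary case $z+1=Z_{2k}$ gives $n_0=r_{2k+1}$, which needs separate care when $2k+1>t$. By Lemmas 6, 7 and \eqref{2.7},
\[
\la un_0\ra_p=s_{2k-1}+(z+1)s_{2k}<(z+2)s_{2k}.
\]
So $\la uc\ra_p<(z+2)s_{2k}$. Now write $c=R(\vz)$ by \refL{4}. Because $\la uc\ra_p\ge z_is_i$, we get $z_i=0$ for all $i\ge 2k+1$ and $z_{2k}\le z+1$. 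Let $\ib=2j+1$. As in the proof of \refL{9}, using \refL{2} we obtain
\[
c\ge r_{2j+1}-\sum_{j+1\le i\le k-1}Z_{2i}r_{2i}-z_{2k}r_{2k}=r_{2k-1}-z_{2k}r_{2k}.
\]
Since $c<r_{2k-1}-zr_{2k}$, this forces $z_{2k}=z+1$. It then remains to show that every deviation from the extremal configuration loses too much. The extremal configuration is $z_{2j+1}=1$ for the leading index, the remaining odd $z_l=0$, and the even $z_l=Z_l$ for $l<2k$. If some odd $z_l$ with $2j+1<l\le 2k-1$ is positive, or some even $z_l<Z_l$, or $z_{2j+1}\ge2$, then $c$ increases by at least $r_{2k-1}$, or by $r_l\ge r_{2k}$, beyond $r_{2k-1}-(z+1)r_{2k}$. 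The delicate point is the even case with $l<2k$: there $r_l\ge r_{2k-2}>r_{2k-1}\ge r_{2k}$, so the increase is large enough, but this requires checking. In every such case $c\ge r_{2k-1}-zr_{2k}$, a contradiction. If no deviation occurs, then $c=r_{2k-1}-(z+1)r_{2k}=n_0$, which contradicts $n_0<c$.

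Hence $c=r_{2k-1}-zr_{2k}$, completing the proof. Together with Lemmas 8–10, this establishes the Main Lemma.
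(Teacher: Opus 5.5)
Your proof is correct and follows essentially the same route as the paper. You translate by $-a$, trap $c$ in $(r_{2k-1}-(z+1)r_{2k},\,r_{2k-1}-zr_{2k}]$, compare $\la uc\ra_p$ with $\la u(r_{2k-1}-(z+1)r_{2k})\ra_p$ to force $z_i=0$ for $i\ge 2k+1$ and $z_{2k}\le z+1$, and finish with the \refL{9}-type lower bound on $R(\vz)$ plus the deviation count, which you spell out more explicitly than the paper does.

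Two small touch-ups. First, the boundary case you flag is vacuous: $2k=t$ gives $r_{2k}=1$, which leaves no room for a strict inequality. Second, to exclude $z_i>0$ for $i\ge 2k+1$ you should use the sharper bound $\la uc\ra_p<s_{2k-1}+(z+1)s_{2k}\le s_{2k+1}$ rather than $(z+2)s_{2k}$, because the latter is too weak when $z=Z_{2k}-1$.
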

\begin{proof}
The proof of this lemma is analogous to the proof of \refL{10} and we proceed at a swifter pace. Translating the closed interval $[a,b]$ by $-a$ preserves the inequality \eqref{4} and yields
\begin{equation}\label{2.14}
\la uc\ra_p<\min_{0<n<c}\la un\ra_p \qquad [c=b-a].
\end{equation}
Recall that, by \eqref{2.8}, $2\le c\le p-2$, and that $\la ur_1\ra_p=1$. Plainly, \eqref{2.14} requires that $c$ must be $\le r_1$. We now find the index $2k-1\ge1$ such that $r_{2k+1}<c\le r_{2k-1}$, and then the integer $0\le z\le Z_{2k}-1$ such that
\begin{equation}\label{2.15}
r_{2k-1}-(z+1)r_{2k}<c\le r_{2k-1}-zr_{2k}.
\end{equation}
This is the analogue of \eqref{2.11}. And the analogue of \eqref{2.12} is
\begin{equation}\label{2.16}
\la uc\ra_p<\la u(r_{2k-1}-(z+1)r_{2k})\ra_p=s_{2k-1}+(z+1)s_{2k}<(z+2)s_{2k}.
\end{equation}
It follows that every representation $c=R(\vz)$, where $\vz=(z_i)\in\CZ$, must satisfy $z_i=0$, for all $i\ge2k+1$, as well as $z_{2k}\le z+1$. Furthermore, in order to satisfy the second inequality in \eqref{2.15}, it must be that $\vz$ is of such form that the direct evaluation of $R(\vz)$ gives 
\[
R(\vz)=z_{2l+1}-\sum_{l+1\le j\le k-1}Z_{2j}r_{2j}-z_{2k}r_{2k},
\]
for some $0\le l\le k-1$, so that $c=r_{2k-1}-z_{2k}r_{2k}$. This is the analogue of the first equality in \eqref{2.13}. Combining this with \eqref{2.15} finally gives $z_{2k}=z$, completing the proof of the lemma.
\end{proof}

\section{The Sets $S_\gD(p,q)$ and $N_\gD(p,q)$}
In Section 2, we analyzed dominant pairs of linear permutations $u\cdot n\pmod p$. In the present section  we apply this analysis to give complete characterization of the sets $S_\gD(p,q)$ and $N_\gD(p,q)$ defined in \eqref{3}. For this application, we take the parameter $u$ in Section 2 to be the multiplicative inverse of $q$ modulo $p$. Note that with $u$ so defined, the corresponding parameter $r_1$, $ur_1\equiv1\equiv uq\pmod p$, also satisfies $r_1\equiv q\pmod p$. With these conventions in place, we may now interpret \eqref{2.1} and the sequences $(r_i)$ and $(Z_i)$ as being defined in terms of the given pair $p$ and $q$. Parenthetically, for an alternative initial conditions of \eqref{2.1} we may now take $r_{-1}=q$ and $r_0=p$.

\begin{theorem}\label{T1}
\emph{(i)} Unless $q=p+1$, we have $S_\gD(p,q)=N_\gD(p,q)$. For $q=p+1$, we have $S_\gD(p,q)=N_\gD(p,q)\cup\{2\}$, and $2\notin N_\gD(p,q)$.  \\[6pt]
\emph{(ii)} We have
\begin{equation}\label{3.1}
S_\gD(p,q)=\{\, r_{i-1}-zr_i\mid0\le z\le Z_i-1\text{\ and\ }1\le i\le t\,\}.
\end{equation}
\end{theorem}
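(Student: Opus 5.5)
The plan is to reduce \eqref{3.1} to the Main Lemma by identifying pairs of consecutive representable integers with dominant pairs of the permutation $u\cdot n\pmod p$, where $u$ is the inverse of $q$ modulo $p$. Part (i) then follows from a symmetry of $S(p,q)$ together with a small boundary check.

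The basic dictionary is this. Every integer $n$ can be written uniquely as $n=iq+jp$ with $0\le i\le p-1$. Since $uq\equiv1\pmod p$, the coefficient is $i=\la un\ra_p$. Hence $n$ is representable if and only if $j\ge0$, that is, if and only if $n\ge q\la un\ra_p$.

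\emph{Step 1: consecutive representables give dominant pairs.} Let $\ell_j<\ell_{j+1}$ be consecutive in $S(p,q)$, write $a=\ell_j$, $b=\ell_{j+1}$, and take $a<n<b$. I want to show that $\la un\ra_p>\max(\la ua\ra_p,\la ub\ra_p)$, which is \eqref{4}.
\begin{itemize}
\item Suppose instead $\la un\ra_p\le\la ua\ra_p$. Then $n>a\ge q\la ua\ra_p\ge q\la un\ra_p$, so $n$ is representable, which is a contradiction.
\item Suppose instead $\la un\ra_p\le\la ub\ra_p$. Then $b-n\equiv(\la ub\ra_p-\la un\ra_p)q\pmod p$, with coefficient in $[0,p)$. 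Two sub-cases:
 \begin{itemize}
 \item If $b-n<q(\la ub\ra_p-\la un\ra_p)$, then $n$ is representable, a contradiction.
 \item Otherwise $b-n=(\la ub\ra_p-\la un\ra_p)q+mp$ with $m\ge0$. Then $n'=\la un\ra_p\,q+(j_b)p$ is representable and lies in $(a,b)$, where $j_b$ is the $j$-coefficient of $b$; this needs care.
 \end{itemize}
\end{itemize}
The cleaner route I would actually take is to note that $b-a\le p$, since $a+p$ is representable. Then any $n\in(a,b)$ with $\la un\ra_p\le\la ub\ra_p$ satisfies $n>b-p\ge q\la ub\ra_p-p$ and $n\equiv\la un\ra_p q$. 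This forces $n\ge q\la un\ra_p$, because the next smaller element of its class, $n-p$, is already below $b-p$. So again $n$ is representable, a contradiction. Hence every element of $S_\gD(p,q)$ lies in $D_\gD(p,u)$.

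\emph{Step 2: every dominant-pair difference occurs.} Conversely, each value $r_{i-1}-zr_i$ must be realized by some consecutive pair in $[0,(p-1)(q-1)]$. Lemmas 8 and 9 give explicit dominant pairs: $(a,p)$ with $a=p-r_{2k}+zr_{2k+1}$, and $(0,b)$ with $b=r_{2k-1}-zr_{2k}$.
\begin{itemize}
\item For the pair $(0,b)$ I would take $\ell=0$ and the next representable. Condition \eqref{4} says every $n\in(0,b)$ has $\la un\ra_p>\la ub\ra_p$, and $b\le r_1\le q$ shows that $b$'s class starts low enough. So I need to check that $b$ is the next representable after $0$, or else shift by a suitable multiple of $p$ or of $q$.
\item More robustly, translate by $Mq$: adding $q$ shifts every residue $\la un\ra_p$ by $1$. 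Then choose $M$ so that the translated endpoints are representable while the interior points remain nonrepresentable. Representability of $n+Mq$ is governed by comparing $n$ with $q(\la un\ra_p)$ after the shift, and the dominance inequality \eqref{4} is exactly what makes a single threshold separate the endpoints from the interior.
\end{itemize}
I expect this step to be the main obstacle: the shift must be chosen so that the whole pair stays inside $[0,(p-1)(q-1)]$. The exceptional case $q=p+1$ in (i) suggests that this constraint is genuinely binding at the edges.

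\emph{Step 3: part (i).} Use the classical symmetry that $n$ is representable if and only if $pq-p-q-n$ is not. Under the reflection $n\mapsto pq-p-q-n$, consecutive nonrepresentables $n_j<n_{j+1}$ correspond to consecutive representables in the reflected window, except near the ends $0$ and $(p-1)(q-1)$.
\begin{itemize}
\item The gap $b-a$ of a representable pair near the top endpoint, for example $\ell_{\vta}-\ell_{\vta-1}$, must be checked to reappear elsewhere, again using the translation freedom from Step 2.
\item For $q=p+1$, the pair $(p-1)(q-1)-2,\,(p-1)(q-1)$, namely $p^2-p-2$ and $p^2-p$, exhibits the difference $2$ among representables with no counterpart among nonrepresentables, since the nonrepresentables form runs separated by single representables.
\end{itemize}
Finally, combining Steps 1 and 2 with the Main Lemma gives \eqref{3.1}.
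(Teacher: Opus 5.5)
Your Step~1 (the ``cleaner route'') is sound and is essentially the paper's Lemma~14. The reason is that $n-q\la un\ra_p$ is a multiple of $p$ exceeding $q\la ub\ra_p-p-q\la un\ra_p\ge -p$, so it is $\ge0$.

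\textbf{Step 2 has a genuine gap: the proposed mechanism fails.} For $0\le n\le p$ and $0\le M<p$ you have $\lfloor (n+Mq)/q\rfloor=M$ and $\la u(n+Mq)\ra_p\equiv\la un\ra_p+M\pmod p$. By your own criterion, $n+Mq$ is then representable if and only if $\la un\ra_p=0$ or $\la un\ra_p\ge p-M$. So a shift by $Mq$ makes the points with the \emph{largest} residues representable first. Under \eqref{4} those are the interior points, not the endpoints. Hence for a pair from Lemma~8 or~9 with nonempty interior, no $M$ makes the endpoints representable while keeping the interior nonrepresentable, except $(0,p)$.

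The missing idea is to translate by a multiple of $p$, which leaves every $\la un\ra_p$ unchanged, into a window on which $\lfloor n/q\rfloor$ is essentially constant and acts as the threshold:
\begin{itemize}
\item If $\la ua\ra_p>\la ub\ra_p$, take $s=\la ua\ra_p$ and the window $[sq,\,sq+b-a]$. Here $sq\equiv a\pmod p$, and $\lfloor n/q\rfloor=s$ throughout because $b-a<p<q$.
\item If $\la ua\ra_p<\la ub\ra_p$, take $s=\la ub\ra_p$ and the window $[sq-(b-a),\,sq]$. Here $\lfloor n/q\rfloor=s-1$ except at the right end.
\end{itemize}
The criterion $\chi(n)=1\iff\la un\ra_p\le\lfloor n/q\rfloor$ then turns \eqref{4} into exactly ``both endpoints representable, every interior point not.'' The obstacle you anticipate is not real: the interior contains a nonrepresentable integer, which is $\le pq-p-q$, so the right endpoint is $\le (p-1)(q-1)$. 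What does need separate treatment is the difference $1$, whose interior is empty; the paper uses Lemma~12(ii) for this.

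\textbf{Step 3 is not yet an argument.} The symmetry gives $n_j+\ell_{\vta-j}=pq-p-q$. So the only value that can be missing from $N_\gD$ is $\ell_\vta-\ell_{\vta-1}$, and two things remain to be shown:
\begin{itemize}
\item This value equals $2$ for all $p,q$, i.e.\ $pq-p-q\pm1$ are representable. Write $1=x_1q+y_1p-pq$ to see this.
\item For $q\ne p+1$, some other consecutive pair in $S$ is at distance $2$. The paper takes $n=pq-q-2p$, with $n+1=(x_1-1)q+(y_1-2)p$ and $n-1=(p-1-x_1)q+(q-2-y_1)p$, valid since $1<y_1<q-1$ when $q>p+1$.
\end{itemize}
The second point can also be done by the ``translation freedom'' once Step~2 is repaired. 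Apply the window construction to a dominant pair of difference $2$, which exists by Step~1. The window has an endpoint $sq$ with $s\ge1$. But $pq-p-q+1$ is never a multiple of $q$, and $pq-p-q-1$ is one only when $q=p+1$, so the window pair is not the top pair.

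Finally, your reason for $2\notin N_\gD$ when $q=p+1$ is backwards. If nonrepresentables were separated by single representables, consecutive nonrepresentables would differ by $2$. In fact, for $q=p+1$ the representables in $[0,(p-1)(q-1)]$ form the runs $[kp,\,k(p+1)]$ of length $k+1$. Every run lying between two nonrepresentables therefore has length at least $2$, and $N_\gD(p,p+1)=\{1\}\cup\{k+2\mid 1\le k\le p-2\}$.
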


Let us recall some of the terminology introduced in Section 1. The set 
\[
S=S(p,q)=\{\,\ell_0,\ell_1,\dots,\ell_\vta\,\} \qquad[\ell_j<\ell_{j+1}],
\]
is the set of representable integers $\le\gf(p,q)$, where $\vta=\gf(p,q)/2$. Nonrepresentable integers $>0$ are
\[
N=N(p,q)=\{\,n_1,n_2,\dots,n_\vta\,\}.
\]
Thus $S_\gD=\{\ell_{j+1}-\ell_j\}$ and $N_\gD=\{n_{j+1}-n_j\}$ give all possible distances between consecutive integers of each type. Perhaps it is worth making it explicit that, for instance, the distance $n_{j+1}-n_j$ is one greater that the length of the block of consecutive representable integers between $n_j$ and $n_{j+1}$. We now introduce the characteristic function of the representable integers $\la p,q\ra$ \eqref{2.a}, 
\[
\chi(n)\coloneq\begin{cases}
1, &\text{if }n\in\la p,q\ra\\
0, &\text{otherwise}.\end{cases}
\]
Furthermore, observe that, by the Chinese remainder theorem, each integer $n$ can be written uniquely in the form
\begin{equation}\label{3.2}
n=x_nq+y_np+\dn pq,
\end{equation}
where $0\le x_n<p$, $0\le y_n<q$ and $\dn\in\Z$, so that each $n$ may be identified $n\longleftrightarrow(x_n,y_n,\dn)$ with a triple of such integers. Representable integers are $n$ with $\dn\ge0$:
\begin{equation}\label{3.3}
\chi(n)=1 \quad\text{if and only if}\quad \dn\ge0.
\end{equation}
Finally, coefficients $x_n$ and $y_n$ satisfy the congruences
\begin{equation}\label{3.4}
n\equiv x_nq\pmod p \quad\text{and}\quad n\equiv y_np\pmod q.
\end{equation}

The semigroup $\la p,q\ra$ is said to be symmetric in view of the fact that $n\in\la p,q\ra$ if and only if $pq-p-q-n\notin\la p,q\ra$. This readily follows from \eqref{3.2}. (As are the facts that $\chi(pq-p-q)=0$ and $\chi(pq-p-q+1)=1$ mentioned earlier.) This symmetry implies the identity $n_j+\ell_{\vta-j}=pq-p-q$ and shows that
\begin{equation}\label{3.5}
\{\,n_{j+1}-n_j\,\}=\{\,\ell_j-\ell_{j-1}\mid 1\le j\le\vta-1\,\}.
\end{equation}
This takes us most of the way of comparing the sets $N_\gD$ and $S_\gD$, but leaves the question of whether $\ell_\vta-\ell_{\vta-1}=\ell_j-\ell_{j-1}$ for some $j<\vta$, and thereby $S_\gD=N_\gD$.

\begin{lemma}\label{L12}
The following three assertion hold.
\begin{enumerate}
\item[\emph{(i)}] $\ell_\vta=pq-p-q+1$ and $\ell_{\vta-1}=pq-p-q-1$, so $\ell_\vta-\ell_{\vta-1}=2$.
\item[\emph{(ii)}] There exists $\ell_j$ such that $\ell_j+1=\ell_{j+1}$.
\item[\emph{(iii)}] The equation $\ell_j-\ell_{j-1}=2$ is soluble for $j<\vta$ if and only if $q>p+1$.
\end{enumerate}
\end{lemma}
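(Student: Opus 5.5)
We will prove the three assertions of the lemma directly from the CRT description \eqref{3.2}--\eqref{3.3} and the symmetry $\chi(n)+\chi(pq-p-q-n)=1$.

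For (i), note first that $\ell_\vta=\gf(p,q)=pq-p-q+1$, which is representable since $\chi(pq-p-q+1)=1$. Next we check that $pq-p-q-1$ is representable. By the symmetry this is equivalent to $1\notin\la p,q\ra$, which holds because $p\ge3$. The integer $pq-p-q$ lies strictly between $pq-p-q-1$ and $pq-p-q+1$ and is nonrepresentable. So these are the last two elements of $S$, and their difference is $2$.

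For (ii), we take $\ell_j=pq-2p$ or another explicit pair; the simplest choice is $\ell_j=(q-1)p$ and $\ell_{j+1}=(q-1)p+1$. We must verify three things.
\begin{itemize}
\item $(q-1)p\le \gf(p,q)$. This holds if $q-1\le (p-1)(q-1)/p$, which is false. So this choice fails and we choose differently.
\item A better choice is to find $i,j,i',j'\ge0$ with $iq+jp+1=i'q+j'p\le\gf(p,q)$. Let $x\in[1,p-1]$ satisfy $xq\equiv1\pmod p$. Then $xq-1$ is a multiple of $p$, say $xq-1=jp$ with $j\ge0$, so $\ell=xq-1$ and $xq$ are both representable and consecutive.
\item It remains to check $xq\le\gf(p,q)$, i.e.\ $xq\le pq-p-q+1$, i.e.\ $(x+1)q\le pq-p+1$. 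This holds when $x\le p-2$.
\end{itemize}
If instead $x=p-1$, so that $q\equiv-1\pmod p$, we use the pair $yp$, $yp+1$ with $yp\equiv-1\pmod q$ and argue symmetrically. Alternatively, in that case we take $(q+1)=kp$, so $q$ and $q+1$ are consecutive representables with $q+1\le\gf(p,q)$ since $p\ge3$.

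For (iii), suppose first that $q=p+1$. Then $\la p,p+1\ra\cap[0,\gf]$ is the union of the blocks $[kp,kp+k]$. Consecutive blocks are separated by gaps of nonrepresentables of length $p-1-k$, and these lengths shrink to $1$ only at the last gap, before $\ell_\vta$. So a difference of $2$ occurs only at $j=\vta$.

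Conversely, suppose $q>p+1$. We seek a representable $\ell<pq-p-q-1$ with $\ell+1$ nonrepresentable and $\ell+2$ representable. By symmetry this is equivalent to a nonrepresentable $m=pq-p-q-\ell-2>0$ with $m+1$ representable and $m+2$ nonrepresentable. Near the bottom, the representables are $0,p,2p,\dots$ and $q$. We take $m=p+1$ when $p+1$ is nonrepresentable, which holds since $q>p+1$. Then $m+1=p+2$ must be representable, which is not automatic. Instead we use the pattern around $p$: the integers $p-1$, $p$, $p+1$ are nonrepresentable, representable, and nonrepresentable respectively, the last because $q\ne p+1$ and $p+1<2p$. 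Thus $\ell_1=p$ sits isolated between nonrepresentables. Translating by symmetry, $n=pq-2p-q$ is nonrepresentable with both neighbours representable. Hence $\ell_{j-1}=pq-2p-q-1$ and $\ell_j=pq-2p-q+1$ give difference $2$ with $j<\vta$.

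The main obstacle is the bookkeeping in (ii) for the case $x=p-1$, and confirming that the symmetric images in (iii) remain within range and distinct from the final pair.
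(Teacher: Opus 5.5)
Your proof is correct. It differs from the paper's in one step: the case $q=p+1$ of (iii).

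\textbf{(i) and the $q>p+1$ half of (iii).} These are the paper's argument in different form. You use the symmetry $\chi(n)+\chi(pq-p-q-n)=1$ (at $n=0,1$, and at $n=-1$ for the fact $\chi(pq-p-q+1)=1$ that you quote). The paper instead writes out the CRT coefficients $(x_1-1,y_1-1)$ and $(p-1-x_1,q-1-y_1)$, which is the same computation. Your witness $pq-2p-q$ is the paper's. You obtain it as the mirror image of the isolated element $\ell_1=p$, which explains the choice and avoids the paper's check $1<y_1<q-1$. The range check you left open is just $pq-2p-q+1<pq-p-q-1\iff p>2$.

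\textbf{(ii).} The paper takes both pairs $((p-x_1)q,\,y_1p)$ and $((q-y_1)p,\,x_1q)$. It uses $x_1q+y_1p=pq+1\le 2\gf(p,q)$ for $q\ge p+2$, and handles $q=p+1$ separately. You take $(x_1q-1,x_1q)$ and split on $x_1=p-1$; your pair $(q,q+1)$ in that case is exactly the paper's first pair. Your other suggestion there, $(yp,yp+1)$ with $yp\equiv-1\pmod q$, does not work. $yp+1$ is a multiple of $q$ below $pq$ that is $\equiv 1\pmod p$, so $yp+1=x_1q$, and you are back at the pair that just failed. Only your ``alternatively'' pair does the job.

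\textbf{The case $q=p+1$ of (iii).} The paper uses a shift argument: $n\pm1=(x_n\pm1)q+(y_n\mp1)p-pq$, so some neighbour of a nonrepresentable $n<pq-p-q$ is nonrepresentable. You instead write $\la p,p+1\ra=\bigcup_k[kp,kp+k]$. Then consecutive differences are $1$ inside blocks and $p-k$ between blocks $k$ and $k+1$, which equals $2$ only at $k=p-2$, the final step to $\ell_\vta$. This is more explicit and exhibits the full structure of $S(p,p+1)$. The paper's version stays within the CRT bookkeeping used throughout the section.

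In a write-up, drop the abandoned attempts (the pair starting at $(q-1)p$, and $m=p+1$).
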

\begin{proof}
We will repeatedly apply \eqref{3.2} and \eqref{3.3}. Thus, the aforementioned fact that $\chi(pq-p-q)=0$ is immediate. Now write 
\[
1=x_1q+y_1p+\gd_1pq=x_1q+y_1p-pq,
\]
and observe that, since $x_1,y_1>0$,
\[
pq-p-q+1=(x_1-1)q+(y_1-1)p\implies \chi(pq-p-q+1)=1
\]
and
\[
pq-p-q-1=(p-1-x_1)q+(q-1-y_1)p\implies\chi(pq-p-q-1)=1.
\]
This proves our first assertion. 

The second assertion is immediate if $p+1=q$. For $q\ge p+2$, we note that
\[
(p-x_1)q+1=y_1p \quad\text{and}\quad (q-y_1)p+1=x_1q,
\]
from which the assertion follows, provided that one of the terms $y_1p$ or $x_1q$ is $\le\gf(p,q)$. One readily verifies that if $q\ge p+2$, then $2\gf(p,q)\ge pq+1$, and we see that the smaller of these two terms is $<\gf(p,q)$, as required.

For the last assertion, observe that $\ell_j-\ell_{j-1}=2$ is soluble for $j<\vta$ if and only if there is $0<n<pq-p-q$ such that $\chi(n)=0$ and $\chi(n\pm1)=1$. Note that
\[
pq-p-q=(p-1)q+(q-1)p-pq,
\]
so any such $n=x_nq+y_np-pq$ must have either $x_n<p-1$ or $y_n<q-1$. Now, if $q=p+1$, then
\[
n\pm1=(x_n\pm1)q+(y_n\mp1)p-pq,
\]
and we see that either $\chi(n+1)=0$, if $x_n<p-1$, or $\chi(n-1)=0$, if $y_n<q-1$. This takes care of half of the assertion (iii).

To complete the proof, assume that $q>p+1$, write $1=x_1q+y_1p-pq$ and observe that $y_1$ must satisfy $1<y_1<q-1$. Now take $n=pq-q-2p$, so that $\chi(n)=0$ ($n\in(0,pq-p-q)$), and observe that
\[
\chi(n+1)=\chi((x_1-1)q+(y_1-2)p)=1
\]
and
\[
\chi(n-1)=\chi((p-1-x_1)q+(q-2-y_1)p)=1.
\]
This completes the proof of the lemma.
\end{proof}

\begin{proof}[Proof of Theorem 1(i)]
This is immediate from \eqref{3.5} and parts (i) and (iii) of \refL{12}.
\end{proof}

We now turn to part (ii) of \refT{1}. Observe that \eqref{3.2} with $n=1$ tells us that $x_1$ is the multiplicative inverse of $q$ modulo $p$ ($x_1=\la q^{-1}\ra_p$). We reduce the problem of understanding the set $S_\gD$ to the problem of understanding the difference set $D_\gD$ of dominant pairs $D(p,x_1)$. The key observation is contained in the following simple lemma.

\begin{lemma}\label{L13}
Put $u=x_1$. Coefficients $x_n$ satisfy the identity
\begin{equation}\label{3.55}
x_n=\la nu\ra_p\, (=\la nq^{-1}\ra_p),
\end{equation}
and
\begin{equation}\label{3.6}
\chi(n)=1\quad\text{if and only if}\quad \la nu\ra_p\le\lfloor n/q\rfloor.
\end{equation}
\end{lemma}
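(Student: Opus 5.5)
The plan is to read both assertions directly off the Chinese remainder representation \eqref{3.2} together with the characterization \eqref{3.3} of representable integers.

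For \eqref{3.55}, reduce \eqref{3.2} modulo $p$ to get $n\equiv x_nq\pmod p$, which is the first congruence in \eqref{3.4}. Since $u=x_1$ satisfies $x_1q\equiv1\pmod p$ (reduce $1=x_1q+y_1p+\gd_1pq$ modulo $p$), multiplying by $u$ gives $x_n\equiv nu\pmod p$. As $0\le x_n<p$, this forces $x_n=\la nu\ra_p$.

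For \eqref{3.6}, rewrite \eqref{3.2} as $n-x_nq=p(y_n+\dn q)$. The integer $n-x_nq$ is therefore divisible by $p$, and because $0\le y_n<q$, the sign of $\dn$ is read off from the size of $n-x_nq$. If $\dn\ge0$, then $n-x_nq\ge0$, so $x_nq\le n$ and hence $x_n\le\lfloor n/q\rfloor$. Conversely, if $\dn\le-1$, then
\[
n-x_nq=p(y_n+\dn q)\le p(q-1-q)=-p<0,
\]
so $x_nq>n$ and hence $x_n>\lfloor n/q\rfloor$, because $x_n$ is an integer. Thus $\dn\ge0$ if and only if $x_n\le\lfloor n/q\rfloor$. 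Combining this with \eqref{3.3} and \eqref{3.55} gives \eqref{3.6}.

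The only step needing care is the converse direction just given. The point is that the divisibility by $p$ makes $n-x_nq$ jump from a nonnegative value straight to at most $-p$, so the equivalence holds with no boundary cases. Everything else is routine.
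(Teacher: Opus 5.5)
Your proof is correct and follows the same route as the paper. You reduce \eqref{3.2} modulo $p$, using $x_1q\equiv1\pmod p$, to get \eqref{3.55}, and then combine $x_nq\le n\iff\delta_n\ge0$ with \eqref{3.3}; beyond that, you only spell out the step the paper leaves implicit, namely that $n-x_nq=p(y_n+\delta_nq)\le-p$ whenever $\delta_n\le-1$.
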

\begin{proof}
Formula \eqref{3.2} with an arbitrary $n$ and $n=1$ implies that $x_n$ satisfies $x_n\equiv nu\pmod p$, whence \eqref{3.55} holds. (It is also true that $y_n=\la ny_1\ra_q$, but we will have no use for this below.) Observe also that $x_nq\le n$ if and only if $\gd_n\ge0$, whence \eqref{3.6} holds by \eqref{3.3}.
\end{proof}

The next two lemmas apply \refL{13} to complete the reduction.

\begin{lemma}\label{L14}
$S_\gD(p,q)\subseteq D_\gD(p,x_1)$.
\end{lemma}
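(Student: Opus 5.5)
The plan is to show that for any two consecutive representable integers $a=\ell_j<b=\ell_{j+1}$ in $S(p,q)$, the pair $(a,b)$ is a dominant pair of type \eqref{4} for the permutation $n\mapsto\la nu\ra_p$ with $u=x_1$. Then $b-a\in D_\gD(p,x_1)$ by definition, which proves the lemma. If $b=a+1$ there is nothing to prove, since \eqref{4} holds vacuously. So we assume $b\ge a+2$, so that every $n$ with $a<n<b$ is nonrepresentable.

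\textbf{Step 1: two preliminary facts.} First, $b-a\le p$: since $a\in\la p,q\ra$, also $a+p\in\la p,q\ra$, so the next representable integer after $a$ is at most $a+p$. (If $a+p$ exceeds $\gf(p,q)$, the same bound holds because $\ell_\vta=\gf(p,q)$.) Since $p<q$, this gives $\lfloor b/q\rfloor\le\lfloor a/q\rfloor+1$. Put $k=\lfloor a/q\rfloor$. Second, by \eqref{3.6} of \refL{13}, the representable endpoints satisfy $\la ua\ra_p\le k$ and $\la ub\ra_p\le\lfloor b/q\rfloor$, while every intermediate $n$ satisfies $\la un\ra_p>\lfloor n/q\rfloor\ge k$.

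\textbf{Step 2: case split on $\lfloor b/q\rfloor$.}

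If $\lfloor b/q\rfloor=k$, then each $n\in(a,b)$ has $\lfloor n/q\rfloor=k$. Hence $\la un\ra_p\ge k+1>\max(\la ua\ra_p,\la ub\ra_p)$, which is \eqref{4}.

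If $\lfloor b/q\rfloor=k+1$, then the multiple $(k+1)q$ of $q$ lies in $(a,b]$ and is representable. Since $b$ is the next representable integer after $a$, we get $b=(k+1)q$. Every $n\in(a,b)$ then again has $\lfloor n/q\rfloor=k$, so $\la un\ra_p\ge k+1$. We also have $\la ub\ra_p\le k+1$, and we must exclude equality $\la un\ra_p=\la ub\ra_p$. This follows because $0<b-n<b-a\le p$, so $n\not\equiv b\pmod p$, and multiplication by $u$ is a permutation of residues modulo $p$. Hence $\la un\ra_p\ne\la ub\ra_p$, and in fact $\la un\ra_p\ge k+1\ge\la ub\ra_p$ with the two values distinct gives $\la un\ra_p>\la ub\ra_p$. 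Together with $\la un\ra_p>k\ge\la ua\ra_p$, this yields \eqref{4}.

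The main point requiring care is this second case: the crude floor bound only gives $\la un\ra_p\ge\la ub\ra_p$, and strictness must come from the permutation property combined with $b-a\le p$.
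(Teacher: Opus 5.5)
Your proof is correct and follows essentially the same route as the paper: you bound $b-a\le p$ via $\chi(a+p)=1$, use \eqref{3.6} to get $\la un\ra_p>\lfloor n/q\rfloor\ge\lfloor a/q\rfloor\ge\la ua\ra_p$, and in the case $\lfloor b/q\rfloor=\lfloor a/q\rfloor+1$ you get strictness from $0<b-n<p$ together with injectivity of $n\mapsto\la un\ra_p$. Your extra observation that $b=(k+1)q$ in that case is correct but not needed, since $\la un\ra_p\ge k+1$ already follows directly from $\la un\ra_p>\lfloor n/q\rfloor\ge k$.
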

\begin{proof}
We prove the claim by showing that every pair $(a,b)=(\ell_j,\ell_{j+1})$ of consecutive elements of $S$ is a solution of \eqref{4} with $u=x_1$. To this end, fix such a pair $(a,b)$ and put $\ga=\lfloor a/q\rfloor$ and $\gb=\lfloor b/q\rfloor$. Observe that $b\le a+p$, since $\chi(a+p)=1$, whence $\gb=\ga$ or $\gb=\ga+1$. Now, by \refL{13}, for $a<n<b$, we have
\[
\la un\ra_p>\lfloor n/q\rfloor\ge\ga\ge\la ua\ra_p.
\]
So if $\la ub\ra_p$ is also $\le\ga$, inequality \eqref{4} holds  and we are done. But $\la ub\ra_p\le\ga+1$, so it remains to consider the possibility that $\la ub\ra_p=\gb=\ga+1$. Note that $\la un\ra_p\neq\la ub\ra_p$, since $0<b-n<p$, whence $\la un\ra_p\neq\ga+1$. Therefore, in fact, in this case $\la un\ra_p>\ga+1=\la ub\ra_p$, and \eqref{4} holds also.
\end{proof}

\begin{lemma}\label{L15}
$S_\gD(p,q)\supseteq D_\gD(p,x_1)$.
\end{lemma}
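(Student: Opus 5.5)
By the Main Lemma it suffices to show that each $d=r_{i-1}-zr_i$ ($1\le i\le t$, $0\le z\le Z_i-1$) occurs as $\ell_{j+1}-\ell_j$. The case $d=1$ is \refL{12}(ii), and $d=p$ is realized by $(0,p)=(\ell_0,\ell_1)$. For every other $d$ I would take the explicit dominant pairs built in Lemmas 8 and 9, namely $(a,p)$ with $a=p-r_{2k}+zr_{2k+1}$ and $(0,b)$ with $b=r_{2k-1}-zr_{2k}$, with $u=x_1$ throughout. Since dominance is invariant under translation by multiples of $p$, the aim is to find a translate $[a',b']=[a,b]+kp$ whose endpoints are representable and whose interior points are all nonrepresentable. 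Then $a',b'$ are consecutive elements of $\la p,q\ra$.

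The tool is criterion \eqref{3.6}: $\chi(n)=1$ if and only if $\la un\ra_p\le\lfloor n/q\rfloor$. Put $M=\max(\la ua\ra_p,\la ub\ra_p)$ and $m=\min_{a<n<b}\la un\ra_p$, so $M<m$. Since $\lfloor\cdot/q\rfloor$ is nondecreasing, two conditions suffice:
\begin{itemize}
\item $\lfloor a'/q\rfloor\ge M$, which makes both endpoints representable;
\item $\lfloor (b'-1)/q\rfloor\le m-1$, which makes every interior $n$ satisfy $\lfloor n/q\rfloor<m\le\la un\ra_p$, so every interior point is nonrepresentable.
\end{itemize}
Together these say $a'\in[Mq,\,mq-d]$, with $a'\equiv a\pmod p$.

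This window has $(m-M)q-d+1$ integers. Whenever that number is at least $p$ the window contains a translate of $a$, and we are done. The pair must also lie in $[0,\gf(p,q)]$. If $b'$ exceeds $\gf(p,q)$, I would note that above $pq-p-q$ every integer is representable, so a consecutive pair there has difference $1$. This case therefore cannot occur for $d\ge2$, or else it is harmless once we also check $b'\le mq\le\gf(p,q)+1$ or use the symmetry \eqref{3.5}.

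The main obstacle is the case where $m-M$ is small (e.g.\ $m=M+1$), so the window has only $q-d+1$ integers and may contain no translate of $a$. Here I would use the explicit values from Lemmas 6 and 7. For the Lemma 8 pair, $\la ua\ra_p=s_{2k}+zs_{2k+1}$, $\la up\ra_p=0$, and the interior minimum is at least $(z+1)s_{2k+1}$. For the Lemma 9 pair the analogous values are $s_{2k-1}+zs_{2k}$ and the interior bound $(z+1)s_{2k}$. I would try to show either that $m-M$ is large enough, or that the interior point $n_0$ attaining $m$ sits at a known position (computed from its $R(\vz)$-representation via \refL{7}), so that a weaker condition suffices. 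That weaker condition only requires $\lfloor n/q\rfloor<\la un\ra_p$ pointwise for interior $n$, not the uniform bound. Concretely, I would let $a'$ range over the translates $a+kp$ near $Mq$ and verify pointwise nonrepresentability directly by \eqref{3.6}. In doing so I would use that $p$ consecutive translates step $\lfloor\cdot/q\rfloor$ by at most one at a time, since $p<q$.
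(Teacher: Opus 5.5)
There is a genuine gap. Your reduction is sound as far as it goes. With $M=\max(\la ua\ra_p,\la ub\ra_p)$ and $m=\min_{a<n<b}\la un\ra_p$, criterion \eqref{3.6} shows it suffices to find $a'\equiv a\pmod p$ in the window $[Mq,\,mq-d]$. Your argument that $b'\le\gf(p,q)$ for $d\ge2$ is also fine. But you stop at the decisive step. You call the case $m=M+1$ an obstacle and offer only a plan: use the explicit values from Lemmas 6--9, locate the interior minimizer, and check points one by one. Nothing in that last paragraph is actually carried out, so the proof is incomplete. The ``obstacle'' does not in fact exist.

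The missing idea is the one the paper's proof rests on. Since $u=x_1\equiv q^{-1}\pmod p$, we have $\la u\,(sq)\ra_p=s$ for every $0\le s<p$. So $Mq$ is congruent modulo $p$ to whichever endpoint attains $M$, and this immediately puts a translate in your window, whatever the size of $m-M$:
\begin{itemize}
\item If $M=\la ua\ra_p$, take $a'=Mq$. It lies in the window because $d<p<q\le(m-M)q$.
\item If $M=\la ub\ra_p$, then $Mq\equiv b$. Take $b'=Mq+p$ and $a'=Mq+p-d$. Then $a'\ge Mq$ because $d\le p$, and $b'\le(M+1)q\le mq$.
\end{itemize}
This also makes the detour through the Main Lemma and the special pairs of Lemmas 8 and 9 unnecessary: the argument works for an arbitrary dominant pair satisfying \eqref{2.8}, which is how the paper proceeds.

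In the first case the paper takes $\ell=sq$ and $\ell'=sq+b-a$ with $s=\la ua\ra_p$. The second case is the symmetric adaptation $\ell'=sq$ and $\ell=sq-(b-a)$ with $s=\la ub\ra_p$. There the left endpoint is representable by the weaker pointwise condition $\la u\ell\ra_p=\la ua\ra_p\le s-1=\lfloor \ell/q\rfloor$, rather than your uniform requirement $\lfloor a'/q\rfloor\ge M$.
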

\begin{proof}
It is more convenient to treat trivial cases separately, and we do this first. Recall that $p\in D_\gD$, since $(0,p)$ is trivially a solution of \eqref{4}. Recall also that $p\in S_\gD$, since $\ell_0=0$ and $\ell_1=p$. The other trivial case is 1 as an element of $D_\gD$ and $S_\gD$. The first of these is due to the fact that every pair $(a,a+1)$ is a vacuous solution of \eqref{4}, and the second follows from \refL{12}(ii).

We now deal with the remaining cases. We begin by recalling that the remaining elements of $D_\gD$ are determined by solutions $(a,b)$ of \eqref{4} satisfying \eqref{2.8}. So we need to show that given any such pair $(a,b)$, there exists a pair of consecutive elements $\ell_j,\ \ell_{j+1}$ of $S$ such that $\ell_{j+1}-\ell_j=b-a$. To that end, note that, by \eqref{2.8}, $\la ua\ra_p\neq\la ub\ra_p$, and assume first that $\la ua\ra_p>\la ub\ra_p$, so that the inequality \eqref{4} now reads
\begin{equation}\label{3.7}
\la ub\ra_p<\la ua\ra_p<\min_{a<n<b}\la un\ra_p.
\end{equation}
Put $s=\la ua\ra_p$, and let $\ell=sq$ and $\ell'=sq+b-a$. Then, $\ell\equiv a\pmod p$, $\ell'\equiv b \pmod p$, and \eqref{3.7} implies that
\[
\la u\ell'\ra_p<\la u\ell\ra_p=s<\min_{\ell<n<\ell'}\la un\ra_p.
\]
But $\ell/q=s=\lfloor\ell'/q\rfloor$, and we conclude, by \refL{13}, that $\chi(\ell)=\chi(\ell')=1$ and that $\chi(n)=0$ for $\ell<n<\ell'$. Since this range of $n$ is not empty, we see that $\ell'\le\gf(p,q)$ and conclude that $\ell$ and $\ell'$ are consecutive elements of $S$. Finally, $\ell'-\ell=b-a$, as required.

We must also address the case where, in addition to satisfying \eqref{2.8}, a dominant pair $(a,b)$ is such that $\la ua\ra_p<\la ub\ra_p$. One readily verifies that an obvious adaptation of the preceding argument completes the proof of the lemma.
\end{proof}

\begin{proof}[Proof of Theorem 1(ii)]
Lemmas 14 and 15 show that $S_\gD(p,q)=D_\gD(p,u)$, where $uq\equiv1\pmod p$. The result now follows from the Main Lemma.
\end{proof}

\section{The Set $G(\Q)$}
We now return to the gaps of polynomials $\Q$. Recall that we denote the coefficients of $\Q$ by $a_m$, $\Q(x)=\sum_{0\le m\le\gf(p,q)}a_mx^m$, and that \eqref{2} hols. Using the characteristic function $\chi$ of $\la p,q\ra$, we now rewrite \eqref{2} in the form
\[
\Q(x)\equiv(1-x)\sum_n\chi(n)x^n \pmod{x^{\gf(p,q)+1}}
\]
and conclude that the coefficients $a_m$ are given by
\begin{equation}\label{4.1}
a_m=\chi(m)-\chi(m-1).
\end{equation}
In particular, it is immediate that $a_m=0$ if and only if $\chi(m)=\chi(m-1)$. Let us also note that utilizing the natural interpretation $a_m=0$ for $m<0$ or $m>\gf(p,q)$, formula \eqref{4.1} extends to all $m\in\Z$.

It is clear from \eqref{4.1} that the gaps of $\Q$ arise in two ways. We have either a string of consecutive coefficients $a_m$ of the form
\begin{enumerate}
\item[(i)] $(1,0,\dots,0,-1)=(a_m)_{m=n_j+1}^{n_{j+1}}$, with $\chi(n_j+1)=1$, so that $n_{j+1}>n_j+1$, or of the form
\item[(ii)] $(-1,0\dots,0,1)=(a_m)_{m=\ell_j+1}^{\ell_{j+1}}$, with $\chi(\ell_j+1)=0$, so that $\ell_{j+1}>\ell_j+1$.
\end{enumerate}
This description includes strings $(1,-1)$ and $(-1,1)$ with no 0s in between. The gaps associated with strings of type (i)/(ii) are positive numbers of the form $n_{j+1}-n_j-1$ or $\ell_{j+1}-\ell_j-1$, respectively. But the ``gaps of type (i)'' are always included amongst the ``gaps of type (ii)'', since, by \refT{1}(i), $N_\gD(p,q)\subseteq S_\gD(p,q)$. Therefore, using a convenient abbreviation $G(p,q)$ for $G(\Q)$, we have
\begin{equation}\label{4.2}
G(p,q)=\{\,\ell_{j+1}-\ell_j-1\mid\ell_{j+1}>\ell_j+1\,\}.
\end{equation}

\begin{theorem}\label{T2}
Let a pair of coprime integers $q\ge p\ge3$ be given, and let the sequences $(r_i)$ and $(Z_i)$ be given by \eqref{2.1}, with the initial conditions $r_0=p$ and $r_1=\la q\ra_p$ (or $r_{-1}=q$). Put
\begin{equation}\label{4.3}
\begin{aligned}
G_i=G_i(p,q) &\coloneq\{\, r_{i-1}-zr_i-1 \mid 0\le z\le Z_i-1\,\}\qquad[1\le i\le t-1], \\
G_t=G_t(p,q) &\coloneq\{\, r_{t-1}-zr_t-1 \mid 0\le z\le Z_t-2\,\}=\{\,1,2,\dots,r_{t-1}-1\,\}.
\end{aligned}
\end{equation}
We have
\begin{equation}\label{4.4}
G(p,q)=\bigcup_{i=1}^t G_i(p,q)
\end{equation}
and, noting that $Z_i=\lfloor r_{i-1}/r_i\rfloor$,
\begin{equation}\label{4.5}
\#G(p,q)=\sum_{i=1}^t\lfloor r_{i-1}/r_i\rfloor -1.
\end{equation}
\end{theorem}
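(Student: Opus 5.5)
The plan is to read off \eqref{4.4} from \eqref{4.2} together with \refT{1}(ii). By \eqref{4.2}, $G(p,q)=\{\,d-1\mid d\in S_\gD(p,q),\ d>1\,\}$. By \eqref{3.1}, $S_\gD(p,q)=\bigcup_{i=1}^t D_i$ with $D_i=\{\,r_{i-1}-zr_i\mid 0\le z\le Z_i-1\,\}$ as in \eqref{2.2}, where $r_1=\la q\ra_p$. Hence $G(p,q)$ is obtained from $\bigcup_i D_i$ by deleting the element $1$ and subtracting $1$ from everything else. The hypothesis $q\ge p$ together with coprimality forces $q>p$, so \refT{1} applies.

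The key step is to locate where $1$ occurs among the $D_i$. For fixed $i$, the smallest element of $D_i$ is attained at $z=Z_i-1$, and by \eqref{2.1} it equals $r_{i-1}-(Z_i-1)r_i=r_i+r_{i+1}$. For $i\le t-1$ we have $r_i\ge r_{t-1}\ge r_t+1=2$, hence $r_i+r_{i+1}\ge2$ and $1\notin D_i$. For $i=t$, since $Z_t=r_{t-1}$ and $r_t=1$, we get $D_t=\{1,2,\dots,r_{t-1}\}$, which contains $1$ exactly at $z=Z_t-1$. Removing this single element and subtracting $1$ turns $D_i$ into $G_i$ for $i<t$, and $D_t$ into $G_t=\{1,\dots,r_{t-1}-1\}$, exactly as in \eqref{4.3}. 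This proves \eqref{4.4}. The degenerate case $q=p+1$ (so $r_1=1$, $t=1$, $Z_1=p$) is covered: only $G_t=\{1,\dots,p-1\}$ appears, consistent with \cite{CCLMS}.

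For the count \eqref{4.5}, I would show that the $D_i$ are pairwise disjoint and that each has exactly $Z_i$ elements. The size claim holds because $r_i>0$, so the map $z\mapsto r_{i-1}-zr_i$ is injective. For disjointness, the computation above places $D_i$ inside the interval $[r_i+r_{i+1},\,r_{i-1}]$, while $\max D_{i+1}=r_i$. For $i<t$ we have $r_{i+1}\ge1$, so $r_i<r_i+r_{i+1}\le\min D_i$. Therefore the intervals are strictly decreasing in $i$, and the sets are disjoint. It follows that $\#S_\gD=\sum_{i=1}^t Z_i$, and removing the single element $1$ gives $\#G(p,q)=\sum_i Z_i-1$. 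Finally, $Z_i=\lfloor r_{i-1}/r_i\rfloor$ by \eqref{2.1}, since $0\le r_{i+1}<r_i$.

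There is no real obstacle here: the substance is already in the Main Lemma and \refT{1}. The only points needing care are the endpoint bookkeeping, namely checking $r_i+r_{i+1}\ge2$ for $i<t$ and the disjointness inequality at $i=t-1$, where $r_t=1$.
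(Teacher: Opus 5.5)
Your proposal is correct and follows the paper's route: the paper simply declares \eqref{4.4} and \eqref{4.5} immediate from \eqref{4.2} and \refT{1}(ii). Your argument supplies the bookkeeping the paper leaves implicit. You show that the element $1$ of $S_\gD(p,q)$ occurs only in $D_t$. You also show that $\max D_{i+1}=r_i<r_i+r_{i+1}=\min D_i$, so the parts $D_i$ are pairwise disjoint with $\#D_i=Z_i$, which is what makes the count \eqref{4.5} valid.
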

\begin{proof}
This is immediate from \eqref{4.2} and \refT{1}(ii).
\end{proof}

Essentially reiterating the remark following the Main Lemma, we point out that \eqref{4.4} and \eqref{4.3} give a partition of the gap set $G(p,q)$, with the part $G_1$ containing the ``larger gaps'', $G_2$ containing the ``next larger gaps'', etc. The following two examples capture the two extremes of possible sizes of gapsets $G(p,q)$.

\begin{example}
If $r_1=\la q\ra_p=1$, so that $t=1$ and $Z_t=r_0=p$, we get
\[
G(p,q)=G_1=\{\,p-1,p-2,\dots,1\,\}.
\]
If $r_1=p-1$, so that $Z_1=1=r_2$ and $Z_2=p-1$, we get
\[
G(p,q)=G_1\cup G_2=\{p-1\}\cup\{\,p-2,p-3,\dots,1\,\}.
\]
Recall that the fact that $G(p,q)=\{1,\dots,p-1\}$ in these cases was already established in \cite{CCLMS}.
\end{example}

\begin{example}
Let $F_1=1$, $F_2=2$, \dots, be the Fibonacci numbers and take $p=F_k$ and $q=F_{k+1}$. One readily verifies that in this case \eqref{2.1} yields $t=k-1$ and
\[
Z_i\equiv1 \quad\text{and}\quad r_i=F_{k-i}\ [0\le i\le k-1].
\]
Then the sets $G_i$ are just the singletons $\{F_{k-i}-1\}$ and we get
\[
G(F_k,F_{k+1})=\{\, F_2-1,F_3-1,\dots,F_k-1\,\}.
\]
\end{example}

\begin{theorem}\label{T3}
In addition to the trivial upper bound $\#G(p,q)\le p-1$, we have the lower bound $\#G(p,q)\ge k$, where $k$ is given by the condition $F_k<p\le F_{k+1}$. Both estimates are sharp.
\end{theorem}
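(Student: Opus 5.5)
The upper bound is trivial, so the work is the lower bound. I would turn it into a statement about the Euclidean algorithm, namely $p\le F_{S}$ where $S=\sum_{i=1}^t Z_i$, and prove that by induction along the recursion \eqref{2.1}.

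\emph{Upper bound.} Every element of \eqref{4.3} has the form $r_{i-1}-zr_i-1$. It is at most $r_0-1=p-1$, and it is at least $1$: for $i<t$ we have $r_{i-1}-(Z_i-1)r_i-1=r_i+r_{i+1}-1\ge1$, and $G_t=\{1,\dots,r_{t-1}-1\}$. Hence $G(p,q)\subseteq\{1,\dots,p-1\}$, so $\#G(p,q)\le p-1$. The first Example ($r_1=1$) shows this is sharp.

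\emph{Lower bound: reduction.} By \eqref{4.5}, $\#G(p,q)=S-1$. It therefore suffices to prove
\[
p=r_0\le F_{S}.
\]
Indeed, if $\#G\le k-1$ then $S\le k$ and $p\le F_S\le F_k$, contradicting $F_k<p$. I would prove a slightly more general claim by induction on $t$. For every chain $r_0>r_1>\dots>r_t=1$, $r_{t+1}=0$, satisfying \eqref{2.1}, the starting value satisfies $r_0\le F_{S}$ with $S=\sum_{i\le t}Z_i$. Here $Z_t=r_{t-1}\ge2$ whenever $r_0\ge 2$, so $S\ge2$. It is convenient to adopt the convention $F_0=1$, which is consistent with $F_2=F_1+F_0$.

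\emph{Base case.} For $t=1$ we have $r_0=Z_1=S$, and $n\le F_n$ holds for all $n\ge1$.

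\emph{Inductive step.} The tail chain starting at $r_1$ has partial quotients $Z_2,\dots,Z_t$ with sum $S'=S-Z_1$. The induction hypothesis gives $r_1\le F_{S'}$. If $t\ge3$, the chain starting at $r_2$ has sum $S'-Z_2\le S'-1$, so $r_2\le F_{S'-1}$. If $t=2$, then $r_2=1\le F_{S'-1}$ directly. Hence
\[
p=Z_1r_1+r_2\le Z_1F_{S'}+F_{S'-1}.
\]
It remains to show $Z_1F_{S'}+F_{S'-1}\le F_{S'+Z_1}$. Iterating $F_{n+1}=F_n+F_{n-1}$ and using the monotonicity $F_{n-1}\ge F_{S'}$ for $n-1\ge S'$, I get
\[
F_{S'+Z_1}\ge F_{S'+1}+(Z_1-1)F_{S'}=Z_1F_{S'}+F_{S'-1},
\]
which closes the induction. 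The main point needing care is the index bookkeeping at the end of the chain, where $r_{t+1}=0$ and $Z_t\ge2$. This is exactly why the bound is $F_S$ rather than $F_{S+1}$, and I would check it carefully on $t=1,2$.

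\emph{Sharpness of the lower bound.} The second Example with $p=F_{k+1}$, $q=F_{k+2}$ gives $\#G=k$ while $F_k<p\le F_{k+1}$. So the lower bound is attained.
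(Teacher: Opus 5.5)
Your proof is correct and follows essentially the paper's route. Both argue by induction on $t$ through the tail chains starting at $r_1$ and $r_2$, and both close the step with the inequality $Z_1F_{S'}+F_{S'-1}\le F_{S'+Z_1}$, which is exactly \refL{16}. The only difference is packaging: you prove $p\le F_S$ directly, with $S=\sum_i Z_i=\#G(p,q)+1$, and work with abstract chains, whereas the paper argues by contradiction on $\#G(r_1,r_0)$ and $\#G(r_2,r_1)$. Your form is a slightly tidier presentation of the same argument.
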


\begin{remark}
In view of the well-known fact that $F_k$ is the nearest integer to $\frac1{\sqrt5}(\frac{\sqrt5+1}2)^{k+1}$, we see that there is a constant $c>0$ such that $\#G(p,q)>c\log p$.
\end{remark}

In the proof of \refT{3} we will use the following simple inequality.

\begin{lemma}\label{L16}
The inequality $nF_k+F_{k-1}\le F_{k+n}$ holds for all $n,k\ge1$.
\end{lemma}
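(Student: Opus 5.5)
We will prove \refL{16} by induction on $n$, with $k\ge1$ arbitrary but fixed. We use the paper's indexing $F_1=1$, $F_2=2$, $F_{m+1}=F_m+F_{m-1}$. Since $F_{k-1}$ appears with $k=1$, we adopt the natural convention $F_0=1$. This is consistent with the recurrence, because $F_2=F_1+F_0$. Throughout, the only facts needed are the recurrence and the monotonicity $F_{m}\ge F_{m-1}\ge1$ for $m\ge1$.

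\emph{Base case} $n=1$. Here the inequality reads $F_k+F_{k-1}\le F_{k+1}$. By the recurrence this holds with equality.

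\emph{Inductive step.} Assume $nF_k+F_{k-1}\le F_{k+n}$ for some $n\ge1$. Then
\[
(n+1)F_k+F_{k-1}=\bigl(nF_k+F_{k-1}\bigr)+F_k\le F_{k+n}+F_k .
\]
It therefore suffices to check that $F_k\le F_{k+n-1}$. This follows from monotonicity of the Fibonacci sequence, since $k+n-1\ge k$. Hence
\[
F_{k+n}+F_k\le F_{k+n}+F_{k+n-1}=F_{k+n+1},
\]
which is the claim for $n+1$.

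There is no real obstacle here. The only point requiring care is the indexing at $k=1$, namely the meaning of $F_0$. If one prefers not to introduce $F_0$, the case $k=1$ can be checked directly. For $n=1$ the inequality $F_1+F_0\le F_2$ becomes an equality under the convention. For larger $n$ the same induction applies verbatim, using $F_{n}\ge F_1$.
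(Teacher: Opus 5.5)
Your proof is correct and is essentially the paper's argument: induction on $n$ via $(n+1)F_k+F_{k-1}=(nF_k+F_{k-1})+F_k\le F_{k+n}+F_{k+n-1}=F_{k+n+1}$. The only differences are cosmetic: the paper checks $n=1,2$ as base cases and uses the strict bound $F_k<F_{k+n-1}$, whereas you start from $n=1$ with the non-strict bound, and you make explicit the convention $F_0=1$ that the paper leaves implicit for $k=1$.
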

\begin{proof}
This inequality holds with equality for $n=1$ and 2. The rest follows by induction:
\[
(n+1)F_k+F_{k-1}=F_k+nF_k+F_{k-1}<F_{k+n-1}+F_{k+n}=F_{k+n+1}.
\]
\end{proof}

\begin{proof}[Proof of \refT{3}]
The upper bound is obvious since, by \refT{2}, the largest gap is $r_0-1=p-1$.

To prove the lower bound we argue by induction on the parameter $t$. For $t=1$, we have seen in Example 1 that
\[
\#G(p,q)=p-1\ge F_k\ge k.
\]
Now, the parameter $t$ corresponds to the pair $(p,q)=(r_0,r_{-1})$. To the pairs $(r_1,r_0)$ and $(r_2,r_1)$ there correspond the parameters $t-1\ (\ge1)$ and $t-2\ (\ge1)$, respectively. It is for these pairs and to the corresponding quantities $\#G(r_1,r_0)$ and $\#G(r_2,r_1)$ that our induction hypothesis will be applied.

We treat $t=2$ separately. In this case $r_2=1$ and $r_0=Z_1r_1+1$, so that $Z_1r_1\ge F_k$. Now, by \eqref{4.5}, we have
\begin{equation}\label{4.6}
\#G(p,q)=Z_1+\#G(r_1,r_0),
\end{equation}
and we need only to show that $\#G(r_1,r_0)\ge k-Z_1$. So let us consider the possibility that $\#G(r_1,r_0)<k-Z_1$. Then, by the induction hypothesis, it must be that $r_1\le F_{k-Z_1}$. But then, by \refL{16},
\[
Z_1r_1\le Z_1F_{k-Z_1}<F_k,
\]
a contradiction. Thus $\#G(p,q)\ge k$ in this case.

Since $Z_1r_1$ could be $<F_k$ if $r_2>1$, we need to work a little harder when $t>2$. We start with \eqref{4.6} and note that if $\#G(r_1,r_0)\ge k-Z_1$, then we are done. Assume that $\#G(r_1,r_0)<k-Z_1$ and write, by another application of \eqref{4.5},
\[
\#G(p,q)=Z_1+Z_2+\#G(r_2,r_1).
\]
Again, if $\#G(r_2,r_1)\ge k-Z_1-Z_2$, then we are done. So consider the possibility that $\#G(r_2,r_1)<k-Z_1-Z_2$. Our current assumption tells us, by the induction hypothesis, that $r_1\le F_{k-Z_1}$ and $r_2\le F_{k-Z_1-Z_2}$. But then, by \refL{16},
\[
r_0=Z_1r_1+r_2\le Z_1F_{k-Z_1}+F_{k-Z_1-Z_2}\le F_k,
\]
a contradiction. This completes the proof that $\#G(p,q)\ge k$.

Finally, Examples 1 and 2 show that the upper and the lower bounds of the theorem are sharp.
\end{proof}

\end{document}